\newtheorem{theorem}{Theorem}[section]
\newtheorem{lem}[theorem]{Lemma}
\newtheorem{prop}[theorem]{Proposition}
\theoremstyle{definition}
\theoremstyle{remark}
\numberwithin{equation}{section}
\newcommand{\abs}[1]{\lvert#1\rvert}
\newcommand{\sech}{{\rm sech}}
\newcommand{\ve}{\varepsilon}
\newcommand{\wt}{\widetilde}
\newcommand{\rmv}{\upsilon}
\newcommand{\rmvs}{\mbox{\small$\upsilon$}}
\begin{document}

\title[On error estimates for Galerkin finite element methods for the Camassa-Holm equation]{On error estimates for Galerkin finite element methods for the Camassa-Holm equation}


\author{D.C. Antonopoulos}
\address{\textbf{D.C. Antonopoulos:} Mathematics Department, National and Kapodistrian University of Athens, 15784 Zographou, 
Greece, Intitute of Applied and Computational Mathematics, FORTH, 70013 Heraklion, Greece}
\email{antonod@math.uoa.gr}

\author{V. A. Dougalis}
\address{\textbf{V. A. Dougalis:} Mathematics Department, National and Kapodistrian University of Athens, 15784 Zographou, 
Greece, Intitute of Applied and Computational Mathematics, FORTH, 70013 Heraklion, Greece}
\email{ doug@math.uoa.gr}

\author{Dimitrios Mitsotakis}
\address{\textbf{D.~Mitsotakis:} Victoria University of Wellington, School of Mathematics and Statistics, PO Box 600, Wellington 6140, New Zealand}
\email{dimitrios.mitsotakis@vuw.ac.nz}



\subjclass[2010]{65M60, 35Q53}

\date{\today}


\keywords{Error estimates, Galerkin / Finite element method, Camassa-Holm equation, peakons}

\begin{abstract}
We consider the Camassa-Holm (CH) equation, a nonlinear dispersive wave equation that models
one-way propagation of long waves of moderately small amplitude. We discretize in space the periodic initial-value problem for CH (written in its original and in system form), using the standard
Galerkin finite element method with smooth splines on a uniform mesh, and prove optimal-order 
$L^{2}$-error estimates for the semidiscrete approximation. We also consider an 
initial-boundary-value problem on a finite interval for the system form of CH and analyze 
the convergence of its standard Galerkin semidiscretization. Using the fourth-order accurate,
explicit, ``classical'' Runge-Kutta scheme for time-stepping, we construct a highly accurate,
stable, fully discrete scheme that we employ in numerical experiments to approximate solutions of CH,
mainly smooth travelling waves and nonsmooth solitons of the `peakon' type.
\end{abstract}

\maketitle

\section{Introduction}
In this paper we analyze standard Galerkin finite element approximations to the 
\textit{Camassa-Holm} (CH) equation
\begin{equation}
u_{t} + 2ku_{x} + 3uu_{x} - u_{xxt} = 2u_{x}u_{xx} + uu_{xxx}\ .
\label{eq11}
\end{equation}
The equation may be derived as a bi-Hamiltonian integrable system by the method of 
\cite{ff} (cf. \cite{fo}, \cite{fu}). It is named after R. Camassa and D.D. Holm, who derived it
in \cite{ch} from the Euler equations of water-wave theory as a model for the unidirectional
propagation of long waves on the surface of an ideal fluid in a uniform horizontal channel.
In \eqref{eq11}, which is written in nondimensional, unscaled variables, $u=u(x,t)$ is 
proportional to the depth-averaged horizontal velocity of the fluid at $x$, $t$, that
are proportional to position along the channel and time, respectively. The equation was further
studied from this viewpoint in \cite{j1}, and was rigorously justified in \cite{cl} as an
$O(\sigma^{4})$-accurate unidirectional approximation to the Euler equations when the scaling 
parameters satisfy $\sigma\ll 1$, $\epsilon=O(\sigma)$. Here $\epsilon=a/h_{0}$,
$\sigma^{2} = h_{0}^{2}/\lambda^{2}$, $h_{0}$ is the depth of the channel, $a$ is a
typical surface wave amplitude, and $\lambda$ a typical wavelength. In scaled, nondimensional
variables such a derivation leads e.g. to the equation
\begin{equation}
\rmv_{\tau} + \rmv_{y} + \epsilon \rmv\rmv_{y} 
- \sigma^{2}\rmv_{yy\tau} - 
\tfrac{\epsilon\sigma^{2}}{3}(2\rmv_{y}\rmv_{yy} 
+ \rmv\rmv_{yyy}) = O(\sigma^{4})\ ,
\label{eq12}
\end{equation}
from which we may recover \eqref{eq11} by replacing the right-hand side by zero and making the 
change of variables $u(x,t) = 2k\epsilon \rmv(y,\tau)/3$, 
$y = \sigma x$, $\tau=2k\sigma t$. Hence,
when compared with the scaled BBM equation, for which $\sigma \ll 1$, $\epsilon = O(\sigma^{2})$,
the equation \eqref{eq12} approximates the Euler equations to the same order of formal accuracy,
possesses an additional nonlinear dispersive term, and remains valid in the larger wave-amplitude
regime $\epsilon = O(\sigma)$. \par
The Camassa-Holm equation has been studied extensively and shown to possess global smooth solutions, 
but also solutions that develop `breaking wave' singularities in finite time; in such singular
solutions the derivative $u_{x}$ blows up while $u$ remains bounded. As expected from its complete
integrability properties, the equation possesses solitons, i.e. solitary waves that interact
cleanly. It should be noted that \eqref{eq11} may be written in system form e.g. as
\begin{equation}
\begin{aligned}
& m = u - u_{xx}\ ,\\
& m_{t} + 2ku_{x} + um_{x} + 2u_{x}m = 0\ .
\end{aligned}
\label{eq13}
\end{equation}
The literature on the well-posedness of the initial-value problem (ivp) for the CH is large; here
we will just mention some basic references. (A survey of results up to 2003 may be found in 
\cite{m}. It should be noted that in some of the works to be mentioned below $k$ is taken
equal to zero in \eqref{eq11} and in \eqref{eq13}; for the significance of $k$ in the context
of travelling-wave solutions, see the remarks below.) In \cite{ce1} Constantin and Escher
established the existence of local in time solutions in $H^{3}$ for \eqref{eq11} with $k=0$, and 
provided sufficient conditions on the initial data that lead to global solutions or to the
emergence of singularities in finite time. Analogous results for the periodic ivp for the CH, written in the system form \eqref{eq13} with $k=0$, were established in \cite{cper} in the Sobolev 
space $H^{2}_{per}$ of periodic functions. In \cite{lo} Li and Olver studied the well-posedness
of the ivp for a slightly generalized form of \eqref{eq11} in $H^{s}$ for $s > 3/2$,
put forth alternative conditions to those of \cite{ce1} for global existence and finite-time blow 
up of solutions, and also studied the existence of weaker solutions in $H^{s}$ for $1<s\leq 3/2$.
The development of singularities has been studied in several papers in addition to those already
mentioned, e.g. in \cite{ce2}, \cite{m}, and in \cite{cl}; in the latter reference the large-time
$(O(T/\epsilon))$ existence of solutions and the breaking of waves of the free surface equation
(in addition to the equation of the velocity $u$) have also been analyzed. More recently, issues of
existence of global generalized solutions, possibly after wave-breaking occurs, have been studied,
cf. e.g. \cite{bc}. We also note that the well-posedness of initial-boundary value problems
(ibvp's) for CH (usually written in the system form \eqref{eq13}) has also been studied; see e.g. 
\cite{k} for the local well-posedness in $H^{4}$ of the ibvp for \eqref{eq13} with $k=0$ on a
finite interval with boundary conditions $u=m=0$ at the endpoints, and also \cite{ey},
where the well-posedness of the quarter-plane and the two-point ibvp has been studied in 
more general spaces. \par
Since CH is completely integrable it is amenable to study in terms of the Inverse Scattering 
Transform (IST), cf. e.g. \cite{cscat}, \cite{c-le}, and their references. A complete 
classification of its travelling-wave solutions has been carried out in \cite{le}; such solutions
may be smooth or nonsmooth, in the latter case satisfying the equation in the sense of 
distributions. Its solitary waves are solitons. If $k\ne 0$ CH possesses smooth solitons and
N-solitons; these have been explicitly constructed, and their interactions studied by IST; cf. 
\cite{j2}, and especially \cite{pai,paii,paiii,p}. For $k=0$ the equation does not have
smooth solitary waves; it has however generalized soliton solutions (`peakons') of the form
$u(x,t) = c\exp(-\lvert x - ct\rvert)$ for any $c >0$; the study of existence and interactions of 
peakons has attracted a lot of attention, starting with \cite{ch}. (As previously mentioned,
other types of singular travelling-wave solutions have been identified in \cite{le}.) Finally, 
 let  us mention that the stability of smooth solitons and peakons has been established in
\cite{c-si}, \cite{c-sii}; see also \cite{c-mo}. \par
A note on the constant $k$ that appears in \eqref{eq11}: As the change of variables that 
connects the scaled version \eqref{eq12} of CH with \eqref{eq11} implies, $k$ is related to the 
speed of e.g. the travelling-wave solutions of \eqref{eq12}, and, therefore, for physical water
waves it is necessary that $k\ne 0$. We may assume that $k>0$, since, otherwise, the change of
variables $U(X,T)=-u(x,t)$, $X = -x$, $T=t$ leads to a CH equation with $-2kU_{X}$
instead of $2ku_{x}$. It has also been noticed since \cite{ch} that the change of variables
$w(z,s) = u(x,t) + k$, $z = x + kt$, $s = t$, transforms \eqref{eq11} into
\begin{equation}
w_{s} + 3ww_{z} - w_{zzs} = 2w_{z}w_{zz} + ww_{zzz}\ ,
\label{eq14}
\end{equation}
i.e. the CH without the $2ku_{x}$ term. (We will usually write \eqref{eq14} using the variables
$u$, $x$, $t$ and call it, cf. \cite{pai}, reduced CH equation (RCH).) As a consequence of this
transformation and previous remarks on the existence of solitary waves of CH and RCH, it follows
that RCH possesses smooth travelling-wave solutions decaying at infinity to $k$, and CH with
$k\ne 0$ possesses peakon-type travelling waves decaying to $-k$. \par
Since the derivation of CH, numerical methods have been used in an exploratory fashion to 
illuminate aspects of the generation and interactions of its solutions, see e.g. \cite{chh}.
The ensuing numerical literature is large; we confine ourselves to mentioning a few papers
that the reader may consult along with their references. Among works that focus on the analysis
of numerical methods and error estimates cf. e.g. \cite{kr} (spectral methods), \cite{hr}
and \cite{ckr} (finite difference methods), and \cite{clp1} (particle methods). Works that
focus on the construction of numerical schemes and numerical experimentation include e.g.
\cite{kl} (spectral methods), \cite{xs} and \cite{lx} (DG methods), and \cite{clp2}
(particle methods). \par
In the paper at hand we consider Galerkin finite element methods for the numerical solution of
CH. In Sections \ref{sec2} and \ref{sec3} we provide the necessary background and notation and prove 
optimal-order-of-convergence $L^{2}$-error estimates for the standard Galerkin semidiscrete 
approximation for the periodic ivp for \eqref{eq11} using smooth splines on a uniform mesh.
The proof is effected by comparing the semidiscrete approximation with the Thom{\'e}e-Wendroff
quasiinterpolant, \cite{tw}. In Section \ref{sec4} we consider the analogous periodic ivp for the system
\eqref{eq13} and prove $L^{2}$-error estimates of optimal rate of convergence for the 
semidiscrete standard Galerkin approximations of $m$ and $u$ using again smooth periodic splines
on a uniform mesh and employing similar error estimation techniques. We refer to this numerical
scheme as the `modified' Galerkin method and note that it requires splines of order $r\geq 2$
(i.e. continuous piecewise polynomial functions of degree $r-1\geq 1$), thus allowing the use of
piecewise linear continuous functions, as opposed to the standard Galerkin scheme for \eqref{eq11}
that requires $r\geq 3$, i.e. using at least $C^{1}$ quadratics due to the presence of the 
$uu_{xxx}$ term. In Section \ref{sec5} we consider the ibvp for the system \eqref{eq13} on a finite interval
with zero Dirichlet boundary conditions for $m$ and $u$ at the endpoints. We study the convergence
of the semidiscrete approximation of the problem by the standard Galerkin method on a 
quasiuniform mesh with piecewise polynomial functions of degree $r-1$ with $r\geq 2$. Using
energy estimates, properties of the $H^1$- and $L^{2}$-projections onto the finite element
spaces and duality techniques we prove a suboptimal $L^{2}$-error estimate of $O(h^{r-1})$ for the
approximation of $m$, and an optimal $H^{1}$-estimate of $O(h^{r})$ and a suboptimal 
$L^{2}$-estimate of $O(h^{r-1/2})$ for the approximation of $u$. We took $k=0$ for simplicity
in Sections \ref{sec3}-\ref{sec5}; the inclusion of the $2ku_{x}$ term poses no difficulty and the results are the
same. \par
In Section \ref{sec6} we present the results of some numerical experiments that we performed using the
Galerkin methods described above in space and discretizing the ivp's for the attendant ode systems 
in the temporal variable by the classical, explicit, fourth-order-accurate Runge-Kutta scheme. The
resulting fully discrete methods are stable under a mild restriction on the Courant number 
$\Delta t/h$, as the problem is not very stiff due to the presence of the BBM type term $-u_{xxt}$
in the left-hand side of \eqref{eq11}. In the numerical experiments we check the spatial order 
of convergence of the schemes in the case of smooth solutions and investigate experimentally 
the order of convergence for peakons. We check the conservation 
properties of the discrete schemes and study the fidelity of the numerical approximations of the
travelling waves by computing their amplitude, phase, shape, and speed errors. Finally, we study
the generation and interactions of peakons and compare the accuracy of our schemes with 
that of other numerical methods in the literature. \par
In this paper we denote, for integer $k\geq 0$, by $H^{k}=H^{k}(0,1)$ the usual $L^{2}$-based
Sobolev spaces on $[0,1]$ by  $\mathring{H}^{1}$ the subspace of $H^{1}$ consisting of functions
with zero boundary conditions, and by $H_{per}^{k}$ the subspace of $H^{k}$ consisting of
1-periodic functions; in  all cases the corresponding norms are denoted by $\|\cdot\|_{k}$.
We let $C^{k}$ denote the $k$-times continuously differentiable functions on $[0,1]$ and
$C_{per}^{k}$ the 1-periodic such functions. The inner product on $L^{2}=L^{2}(0,1)$ is denoted
by $(\cdot,\cdot)$ and the corresponding norm simply by $\|\cdot\|$. The norms of 
$W_{\infty}^{k}=W_{\infty}^{k}(0,1)$ and $L^{\infty}=L^{\infty}(0,1)$ are denoted by 
$\|\cdot\|_{k,\infty}$ and $\|\cdot\|_{\infty}$, respectively. For a Banach space $X$,
$C([0,T];X)$ denotes as usual the continuous maps from $[0,T]$ into $X$. $\mathbb{P}_{r}$ are
the polynomials of degree at most $r$. \vspace{5pt} \\ 
\textbf{Acknowledgements}

\noindent The authors would like to thank Mr. Gregory Kounadis for the numerical experiments in the case of the ibvp (CH-s-b). V.A.D. and D.E.M. acknowledge travel support by grant MTM2014-54710 of the Ministeri\'{o} de Economia y Competividad, Spain. D.E.M. was supported by the Marsden Fund administered by the Royal Society of New Zealand with contract number VUW1418.
 
\section{Periodic splines and the quasiinterpolant}\label{sec2}   

As we will be interested in approximating solutions of the CH
that are 1-periodic functions in the spatial variable, we let $N$ be a positive integer and 
$h=1/N$, $x_{i}=ih$, $i=0,1,\ldots,N$, and for integer $r\geq 2$ consider the associated 
$N$-dimensional space of smooth 1-periodic splines
\[
S_{h} = \{\phi \in C_{per}^{r-2}[0,1] : \phi\big|_{[x_{i-1},x_{i}]} \in \mathbb{P}_{r-1}\,, 1\leq i\leq N\}\ .
\]
It is  well known that $S_{h}$ has the following approximation properties: Given a sufficiently 
smooth 1-periodic function $\rmvs$, there exists $\chi\in S_{h}$ such that
\[
\sum_{j=0}^{s-1}h^{j}\|\rmvs-\chi\|_{j} \leq C h^{s} \|\rmvs\|_{s}, \quad 1\leq s\leq r\ ,
\]
and
\[
\sum_{j=0}^{s-1}h^{j}\|\rmvs-\chi\|_{j,\infty} \leq C h^{s} \|\rmvs\|_{s,\infty}, \quad 1\leq s\leq r\ ,
\]
for some constant $C$ independent of $h$ and $\rmvs$. Moreover, there exists a constant $C$ independent of $h$ such that the inverse properties
\begin{align*}
\|\chi\|_{\beta} & \leq Ch^{-(\beta - \alpha)} \|\chi\|_{\alpha}, \quad 0\leq \alpha\leq \beta\leq r-1\ , \\
\|\chi\|_{s,\infty} & \leq Ch^{-(s +1/2)} \|\chi\|, \quad 0\leq s \leq r-1\ ,
\end{align*}
hold for all $\chi\in S_{h}$. (In the sequel we shall denote by $C$ generic constants independent of $h$.) \par
Thom\'ee and Wendroff, \cite{tw}, proved that there exists a basis $\{\phi_{j}\}_{j=1}^{N}$ of $S_{h}$ with $\mathrm{supp}(\phi_{j})=O(h)$, such that if $\rmvs$ a sufficiently smooth 1-periodic function, the associated \textit{quasiinterpolant}
$Q_{h}\rmvs =\sum_{j=1}^{N}\rmvs(x_{j})\phi_{j}$ satisfies
\begin{equation}
\|Q_{h}\rmvs - \rmvs\| \leq Ch^{r} \|\rmvs^{(r)}\|\ . 
\label{eq21}
\end{equation}
In addition, they showed that the basis $\{\phi_{j}\}_{j=1}^{N}$ may be chosen so that the following 
properties hold: \\
(i)\, If $\psi\in S_{h}$, then
\begin{equation}
\|\psi\| \leq Ch^{-1} \max_{1\leq i\leq N}|(\psi,\phi_{i})|\ .
\label{eq22}
\end{equation}
(It follows from \eqref{eq22} that if $\psi\in S_{h}$, $f\in L^{2}$ are such that 
\[
(\psi,\phi_{i}) = (f,\phi_{i}) + O(h^{\alpha}), \quad \mbox{for} \quad 1\leq i\leq N\ ,
\] 
then $\|\psi\|\leq Ch^{\alpha-1} + \|f\|$.) \\
(ii)\, Let $w$ be a sufficiently smooth 1-periodic function and $\nu$, $\kappa$ integers such that 
$0\leq\nu , \kappa\leq r-1$. Then
\begin{equation}
\bigl( (Q_{h}w)^{(\nu)},\phi_{i}^{(\kappa)}\bigr) = (-1)^{\kappa} h w^{(\nu+\kappa)}(x_{i}) + 
O(h^{2r+j-\nu-\kappa}), \quad
1\leq i\leq N\ ,
\label{eq23}
\end{equation}  
where $j=1$ if $\nu + \kappa$ is even and $j=2$ if $\nu+\kappa$ is odd. \\
(iii)\, Let $f$, $g$ be sufficiently smooth 1-periodic functions and $\nu$ and $\kappa$ as in (ii) above. Let
\[
\beta_{i} = \bigl( f(Q_{h}g)^{(\nu)}, \phi_{i}^{(\kappa)}\bigr) -(-1)^{\kappa} 
\bigl(Q_{h}\bigl[(fg^{(\nu)})^{(\kappa)}\bigr],\phi_{i}\bigr),
\quad 1\leq i\leq N\ .
\]
Then
\begin{equation}
\max_{1\leq i\leq N}|\beta_{i}| = O(h^{2r+j-\nu-\kappa})\ , 
\label{eq24}
\end{equation}
where $j$ as in (ii). \par
It follows from the approximation and inverse properties of $S_{h}$, and  
\eqref{eq21} that if $r\geq 2$, $\rmvs\in H_{per}^{r}(0,1)\cap W_{\infty}^{r}(0,1)$ and 
$V=Q_{h}\rmvs$, then 
\begin{align}
& \|V - \rmvs\|_{j} \leq Ch^{r-j}\|\rmvs\|_{r}, \,\, 
j=0,1,2, \,\, \text{if}\,\,
 r\geq 3, \,\, j=0,1, \,\, \text{if} \,\, r=2\ ,
\label{eq25} \\
& \|V - \rmvs\|_{j,\infty} \leq Ch^{r-j-1/2}\|\rmvs\|_{r,\infty}, \, j=0,1,2,\,
\text{if} \,\,\, r\geq 3, \, j=0,1, \, \text{if} \,\,\, r=2\ ,
\label{eq26} \\
& \|V\|_{j} \leq C, \,\, \mbox{and} \,\, \|V\|_{j,\infty} \leq C, \,\, j=0,1,2,
\,\, \text{if}\,\, r\geq 3, \,\, j=0,1, \,\, \text{if} \,\, r=2\ . 
\label{eq27}
\end{align}
\normalsize
(The inequalities \eqref{eq27} follow from \eqref{eq25} and \eqref{eq26}.)

\section{Standard Galerkin semidiscretization of the periodic problem}\label{sec3}

In this section we consider the periodic initial-value problem for the Camassa-Holm equation 
in its reduced form. For $0\leq t\leq T$ we seek $u=u(x,t)$, 1-periodic in $x$ and satisfying
\begin{equation}{\tag{C-H-per}}
\begin{aligned}
& u_{t} - u_{txx} + 3uu_{x} = 2u_{x}u_{xx} + uu_{xxx}, \quad 0\leq x\leq 1,
\quad 0\leq t\leq T\ ,\\
& u(x,0) = u_{0}(x), \quad 0\leq x\leq 1\ .
\end{aligned}
\label{eqchper}
\end{equation}
Constantin, \cite{cper}, showed that \eqref{eqchper} is well-posed in $H^{2}_{per}$ on
some temporal interval $[0,T]$, $T=T(u_{0}$). Here it will be assumed that \eqref{eqchper} 
has a unique solution that is sufficiently smooth for the purposes of the error 
estimation. \par
In the sequel we will denote the $H^{1}$ inner product by
$A(\phi,\chi) = (\phi,\chi) + (\phi',\chi')$, \,\,\, $\phi$, $\chi \in H^{1}_{per}$.
In addition we note that the right-hand side of the pde in \eqref{eqchper} may be
written as	
\[
2u_{x}u_{xx} + uu_{xxx} = u_{x}u_{xx} + (uu_{xx})_{x}= \tfrac{1}{2}(u^{2}_{x})_{x} +
(uu_{xx})_{x}\ .
\]
Taking this into account and using integration by parts we define 
the semidiscrete approximation $u_{h}$ of \eqref{eqchper} in $S_{h}$ for $r\geq 3$ 
as the map $u_{h} : [0,T]\to S_{h}$ such that
\begin{align}
& (u_{ht},\phi) + (u_{htx},\phi') + 3(u_{h}u_{hx},\phi) + 
\tfrac{1}{2}(u^{2}_{hx},\phi') + (u_{h}u_{hxx}.\phi') = 0, \quad \forall \phi \in S_{h}, 
\quad 0\leq t\leq T\ , 
\label{eq31} \\
& u_{h}(0) = Q_{h}u_{0}.
\label{eq32}
\end{align}
We start the error analysis of the semidiscretization \eqref{eq31}-\eqref{eq32} by
establishing its \textit{consistency} with \eqref{eqchper}:
\begin{lem}
Let $u$, the solution of \eqref{eqchper}, be sufficiently smooth and suppose that $r\geq 3$. If 
$U=Q_{h}u$ and $\psi :[0,T]\to S_{h}$ is such that 
\begin{equation}
A(\psi,\phi) = (U_{t},\phi) + (U_{tx},\phi') + 3(UU_{x},\phi) 
+ \tfrac{1}{2} (U_{x}^{2},\phi') + (UU_{xx},\phi'), \quad \forall \phi \in S_{h}\ ,
\label{eq33}
\end{equation}
then, there exists a constant $C$ independent of $h$, such that 
\begin{equation}
\max_{0\leq t\leq T} \|\psi(t)\|_{1} \leq Ch^{r}\ . 	
\label{eq34}
\end{equation}
\end{lem}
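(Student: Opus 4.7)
The plan is to use the fact that $u$ solves \eqref{eqchper} classically, together with integration by parts, to derive the continuous identity
\[
(u_t,\phi)+(u_{tx},\phi')+3(uu_x,\phi)+\tfrac12(u_x^2,\phi')+(uu_{xx},\phi')=0
\]
for every smooth 1-periodic $\phi$ (after writing the source of the PDE as $\tfrac12(u_x^2)_x+(uu_{xx})_x$). Subtracting this from \eqref{eq33} and setting $\eta:=U-u$, $E(\phi):=A(\psi,\phi)$, we obtain
\begin{align*}
E(\phi) &= (\eta_t,\phi)+(\eta_{tx},\phi')+3(UU_x-uu_x,\phi)\\
&\quad+\tfrac12(U_x^2-u_x^2,\phi')+(UU_{xx}-uu_{xx},\phi').
\end{align*}
Once we show $|E(\phi)|\le Ch^r\|\phi\|_1$ for every $\phi\in S_h$, the $H^1$-coercivity of $A$ yields $\|\psi\|_1^2\le A(\psi,\psi)=E(\psi)\le Ch^r\|\psi\|_1$, which is \eqref{eq34}.

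To bound $E(\phi)$, we expand $\phi=\sum_j\alpha_j\phi_j\in S_h$ and split $E(\phi)$ into (a) pieces containing at least one smooth factor and (b) purely quadratic-in-$\eta$ residuals. In (a), the linear pair $(\eta_t,\phi_j), (\eta_{tx},\phi_j')$ is expanded by \eqref{eq23} applied to $w=u_t$ with $(\nu,\kappa)=(0,0),(1,1)$, and the nonlinear pieces $(u\eta_x,\phi_j), (\eta u_x,\phi_j), (u_x\eta_x,\phi_j'), (u\eta_{xx},\phi_j'), (\eta u_{xx},\phi_j')$ by \eqref{eq24} with $f\in\{u,u_x,u_{xx}\}$ and suitable $(\nu,\kappa)$: each ``$U$-side'' inner product $(f(Q_hu)^{(\nu)},\phi_j^{(\kappa)})$ is rewritten as $(-1)^\kappa(Q_h[(fu^{(\nu)})^{(\kappa)}],\phi_j)$ modulo $O(h^{2r+j-\nu-\kappa})$, and then via \eqref{eq23} as $(-1)^\kappa h(fu^{(\nu)})^{(\kappa)}(x_j)$ modulo the same order; the matching ``$u$-side'' reference $(fu^{(\nu)},\phi_j^{(\kappa)})$ reduces to the same nodal value modulo $O(h^{r+1})$ by local Taylor expansion about $x_j$ together with the exact reproduction of $\mathbb{P}_{r-1}$ by $Q_h$. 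After the leading $h\cdot(\cdot)(x_j)$-terms cancel, each piece in (a) is $O(h^{r+1})$ per $\phi_j$. Assembling via the $L^2$-$\ell^2$ equivalence $\|\phi\|^2\sim h\sum_j\alpha_j^2$ that follows from \eqref{eq22} and the $O(h)$-support of the $\phi_j$, the contribution from (a) is $\le\|\alpha\|_{\ell^1}\cdot Ch^{r+1}\le Ch^{-1}\|\phi\|\cdot Ch^{r+1}=Ch^r\|\phi\|$.

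The residuals in (b) --- namely $3(\eta\eta_x,\phi), \tfrac12(\eta_x^2,\phi')$ and $(\eta\eta_{xx},\phi')$ --- do not fit the framework of \eqref{eq24} and are bounded directly on the assembled $\phi$ via H\"older's inequality and \eqref{eq25}--\eqref{eq27}; for instance,
\[
|(\eta\eta_{xx},\phi')|\le\|\eta\|_\infty\|\eta_{xx}\|\,\|\phi\|_1\le Ch^{2r-5/2}\|\phi\|_1\le Ch^r\|\phi\|_1,
\]
the last inequality holding precisely because $r\ge 3$. Combining (a) and (b) yields $|E(\phi)|\le Ch^r\|\phi\|_1$ for all $\phi\in S_h$, which with $\phi=\psi$ completes the proof. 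We expect the main technical obstacle to be the careful application of \eqref{eq24} with $(\nu,\kappa)=(2,1)$ for the term $(u\eta_{xx},\phi_j')$ --- whose odd-parity correction exponent $2r-1$ needs to be balanced against the Taylor exponent $r+1$ --- along with the borderline bookkeeping for $(\eta_x^2,\phi')$, whose direct H\"older bound has exponent $2r-5/2$, just barely sufficient to beat $h^r$ when $r=3$.
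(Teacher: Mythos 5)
Your proposal is correct and follows essentially the same route as the paper: subtract the continuous weak identity, split the nonlinear differences into terms linear in $\eta=U-u$ (treated by the quasiinterpolant superconvergence properties \eqref{eq23}--\eqref{eq24} together with the dual-basis bound \eqref{eq22}) and terms quadratic in $\eta$ (treated by H\"older plus \eqref{eq25}--\eqref{eq27}, with the same borderline exponent $2r-5/2$ forcing $r\geq 3$), then take $\phi=\psi$ and use coercivity of $A$. The only difference is bookkeeping: instead of your per-node Taylor cancellation $(fu^{(\nu)},\phi_j^{(\kappa)})=(-1)^{\kappa}h(fu^{(\nu)})^{(\kappa)}(x_j)+O(h^{r+1})$, the paper integrates by parts exactly on the $u$-side so that the residual becomes $(Q_h g-g,\phi_i)$ for smooth $g$, which is absorbed at the assembled level via \eqref{eq21} and the remark following \eqref{eq22} --- slightly cleaner, since it avoids invoking the local polynomial reproduction of $Q_h$ beyond the properties already listed.
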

\begin{proof}
Let $\rho = Q_{h}u -u =U - u$. Then
\begin{equation}
A(\psi,\phi) = (\rho_{t},\phi) + (\rho_{tx},\phi') + 3(UU_{x}-uu_{x},\phi) 
+ \tfrac{1}{2}(U_{x}^{2} - u_{x}^{2},\phi') + (UU_{xx}-uu_{xx},\phi'),
\quad \forall \phi \in S_{h}\ .
\label{eq35} 	
\end{equation}
Since 
\begin{align*}
& UU_{x} - uu_{x} = (\rho + u)(\rho_{x} + u_{x}) - uu_{x} = \rho\rho_{x} 
 + u_{x}\rho + u\rho_{x}\ , \\	
& U_{x}^{2} - u_{x}^{2} = \rho_{x}(U_{x} + u_{x}) = \rho_{x}(\rho_{x} 
+ 2u_{x}) = \rho_{x}^{2} + 2u_{x}\rho_{x}\ ,\\
& UU_{xx} - uu_{xx} = (\rho + u)(\rho_{xx} + u_{xx}) - uu_{xx} = 
\rho\rho_{xx} + u_{xx}\rho + u\rho_{xx}\ ,
\end{align*}
we have from \eqref{eq35} that
\begin{equation}
A(\psi,\phi) = (\rho_{t},\phi) + 3(\rho\rho_{x}+u_{x}\rho,\phi)
+ \tfrac{1}{2}(\rho_{x}^{2},\phi') + (\rho\rho_{xx} + u_{xx}\rho,\phi') 
+ (\omega,\phi), \quad	\forall \phi \in S_{h}\ ,
\label{eq36}
\end{equation}
where $\omega : [0,T]\to S_{h}$ is defined by
\[
(\omega,\phi) = (\rho_{tx},\phi') + 3(u\rho_{x},\phi) + (u_{x}\rho_{x},\phi')
+ (u\rho_{xx},\phi'), \quad \forall \phi \in S_{h}\ .
\]
Note that for $1\leq i \leq N$,
\begin{align*}
(\omega,\phi_{i}) & = (\rho_{tx},\phi_{i}') + 3(u\rho_{x},\phi_{i})
+ (u_{x}\rho_{x},\phi_{i}') + (u\rho_{xx},\phi_{i}')\\
& = \bigl((Q_{h}u_{t})_{x} - u_{tx},\phi_{i}'\bigr) 
+ 3\bigl(u(Q_{h}u)_{x}-uu_{x},\phi_{i}\bigr)\\
& \,\,\,\,\,\, + \bigl(u_{x}(Q_{h}u)_{x} - u_{x}^{2},\phi_{i}'\bigr) 
+\bigl((u(Q_{h}u)_{xx} - uu_{xx},\phi_{i}'\bigr)\ .
\end{align*}
Therefore, from \eqref{eq24} it follows that 
\begin{align*}
(\omega,\phi_{i}) & = -(Q_{h}u_{txx}-u_{txx},\phi_{i}) 
+ 3\bigl(Q_{h}(uu_{x}) - uu_{x},\phi_{i}\bigr) \\
& \,\,\,\,\,\,\, - \bigl(Q_{h}(u_{x}^{2})_{x} - (u_{x}^{2})_{x},\phi_{i}\bigr)
- \bigl(Q_{h}(uu_{xx})_{x} - (uu_{xx})_{x},\phi_{i}\bigr) + \gamma_{i}\ ,  	
\end{align*}
where $\max_{1\leq i\leq N}|\gamma_{i}|\leq Ch^{2r-1}$. Hence, from \eqref{eq22},
\eqref{eq21} it follows that 
\begin{equation}
\|\omega\| \leq Ch^{r}\ .
\label{eq37}	
\end{equation}
Putting $\phi = \psi$ in \eqref{eq36} and taking into account \eqref{eq21}, 
\eqref{eq25}, and \eqref{eq26} we get 
\[
\|\psi\|_{1}^{2} \leq C(h^{r}\|\psi\| + h^{2r-3/2}\|\psi\| + h^{2r-5/2}\|\psi\|_{1}
+ h^{2r-2}\|\psi\|_{1}) + \|\omega\| \|\psi\|\ ,
\]
which, by \eqref{eq37} and the fact that $r\geq 3$ yields \eqref{eq34}.	\hfill $\qed$
\end{proof}
We now estimate the $L^{2}$-error of the semidiscrete scheme 
\eqref{eq31}-\eqref{eq32}. 
\begin{prop}
If $u$ is the solution (assumed to be sufficiently smooth) of \eqref{eqchper} in
$[0,1]\times[0,T]$, then for $h$ sufficiently small, the semidiscrete problem
(\ref{eq31})-(\ref{eq32}) has, for $r\geq 3$,  a  unique solution 
$u_{h}: [0,T]\to S_{h}$. Moreover, there exists a constant $C$ independent of $h$
such that 
\begin{equation}
\max_{0\leq t \leq T}\|u(t) - u_{h}(t)\| \leq Ch^{r}\ .
\label{eq38}	
\end{equation}
\end{prop}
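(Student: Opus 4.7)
The plan is to use the standard Galerkin technique anchored at the Thom\'ee--Wendroff quasiinterpolant already introduced in Lemma~3.1. Decompose the error as $u - u_h = -\rho + \theta$, where $\rho = U - u$ and $\theta = U - u_h \in S_h$ with $U = Q_h u$. By \eqref{eq21} and \eqref{eq25}, $\|\rho\| \leq Ch^r$, so it suffices to prove $\|\theta\|_1 \leq Ch^r$ and use the triangle inequality. Existence and uniqueness of $u_h$ locally in time is automatic from the ODE interpretation of \eqref{eq31}--\eqref{eq32} (the mass operator associated with $A(\cdot,\cdot)$ on $S_h$ is invertible); the global-in-time extension on $[0,T]$ will come from the a~priori bound.

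The first step is to derive the error equation by subtracting \eqref{eq31} from \eqref{eq33}. Since $A(\theta_t,\phi) = (\theta_t,\phi) + (\theta_{tx},\phi')$, one finds for every $\phi \in S_h$
\begin{equation*}
A(\theta_t,\phi) = A(\psi,\phi) - 3(UU_x - u_h u_{hx},\phi) - \tfrac{1}{2}(U_x^2 - u_{hx}^2,\phi') - (UU_{xx} - u_h u_{hxx},\phi').
\end{equation*}
Expanding the nonlinear differences as $UU_x - u_h u_{hx} = U\theta_x + u_{hx}\theta$, $U_x^2 - u_{hx}^2 = \theta_x(U_x + u_{hx})$, and $UU_{xx} - u_h u_{hxx} = U\theta_{xx} + u_{hxx}\theta$, setting $\phi = \theta$, and integrating by parts in the $U\theta_x$-, $\theta_x^2$-, and $U\theta_{xx}$-terms (using periodicity, e.g. $(U\theta_{xx},\theta_x) = -\tfrac{1}{2}(U_x,\theta_x^2)$) reduces the left-hand side to $\tfrac{1}{2}\tfrac{d}{dt}\|\theta\|_1^2$ and renders all terms involving $U$ bounded by $C\|U\|_{2,\infty}\|\theta\|_1^2 \leq C\|\theta\|_1^2$, thanks to \eqref{eq27}. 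The terms involving $u_h$ as a multiplier, $(u_{hx}\theta,\theta)$, $(u_{hx}\theta_x,\theta_x)$, and especially $(u_{hxx}\theta,\theta_x)$, are controlled by $C(\|u_h\|_\infty + \|u_h\|_{1,\infty} + \|u_h\|_{2,\infty})\|\theta\|_1^2$, provided these $L^\infty$ norms of $u_h$ remain bounded.

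The principal obstacle is therefore the a~priori control of $\|u_h\|_{2,\infty}$, and this is precisely where the hypothesis $r \geq 3$ enters. I will resolve it via a continuation/bootstrap argument. Let $M = \sup_{[0,T]}\|U\|_{2,\infty} + 1$, and define
\begin{equation*}
t_h = \sup\{\tau \in [0,T] : \|u_h(t)\|_{2,\infty} \leq M \text{ for all } t \in [0,\tau]\},
\end{equation*}
which is positive since $u_h(0) = U(0) = Q_h u_0$. On $[0,t_h]$ the energy identity, combined with $\|\psi\|_1 \leq Ch^r$ from Lemma~3.1 and the arithmetic-geometric inequality, yields
\begin{equation*}
\tfrac{d}{dt}\|\theta\|_1^2 \leq C\|\theta\|_1^2 + Ch^{2r},
\end{equation*}
whence Gronwall's inequality together with $\theta(0) = 0$ gives $\|\theta(t)\|_1 \leq Ch^r$ on $[0,t_h]$.

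It remains to verify that the bootstrap hypothesis can be propagated, i.e.\ that $t_h = T$ for $h$ sufficiently small. Using the inverse inequality $\|\chi\|_{2,\infty} \leq Ch^{-5/2}\|\chi\|$ from Section~2, available because $r \geq 3$, together with $\|\theta\| \leq \|\theta\|_1 \leq Ch^r \leq Ch^3$, we obtain $\|\theta(t)\|_{2,\infty} \leq Ch^{r - 5/2} \leq Ch^{1/2}$ on $[0,t_h]$. Hence $\|u_h\|_{2,\infty} \leq \|U\|_{2,\infty} + \|\theta\|_{2,\infty} \leq M - 1 + Ch^{1/2} < M$ for $h$ small, so by continuity $t_h$ cannot be maximal below $T$, and $t_h = T$. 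The estimate $\|u - u_h\| \leq \|\rho\| + \|\theta\| \leq Ch^r$ then follows on the entire interval, together with global existence and uniqueness of $u_h$.
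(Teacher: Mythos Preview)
Your argument is correct, but the paper's proof is simpler because it avoids the bootstrap on $\|u_h\|_{2,\infty}$ entirely. The difference lies in how the nonlinear terms are expanded. You write, e.g., $UU_{xx}-u_hu_{hxx}=U\theta_{xx}+u_{hxx}\theta$, which forces you to control $\|u_{hxx}\|_\infty$. The paper instead expands symmetrically about $U$: $UU_{xx}-u_hu_{hxx}=U\theta_{xx}+U_{xx}\theta-\theta\theta_{xx}$, and similarly $U_x^2-u_{hx}^2=2U_x\theta_x-\theta_x^2$. After setting $\phi=\theta$ and integrating by parts, the cubic-in-$\theta$ terms cancel exactly, since $(\theta\theta_{xx},\theta_x)=-\tfrac{1}{2}(\theta_x,\theta_x^2)$; this is the discrete counterpart of the identity that gives $H^1$-conservation for the continuous equation (and indeed the paper first observes $\tfrac{d}{dt}\|u_h\|_1^2=0$, which already yields global existence without any smallness assumption on $h$). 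What remains involves only $U_x$ and $U_{xx}$ as coefficients, bounded directly by \eqref{eq27}, so one obtains $\tfrac{d}{dt}\|\theta\|_1^2\le C(h^{2r}+\|\theta\|_1^2)$ with no continuation argument. Your bootstrap via the inverse inequality $\|\theta\|_{2,\infty}\le Ch^{-5/2}\|\theta\|$ is a valid general-purpose technique, but here it is unnecessary: the structure of the CH nonlinearity provides the needed cancellation for free.
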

\begin{proof}
It is clear that the ode system represented by \eqref{eq31}-\eqref{eq32} has 
a unique solution $u_{h}$ locally in $t$. While the solution exists, taking 
$\phi = u_{h}$ in (\ref{eq31}) and using integration by parts gives
\[
\tfrac{1}{2}\tfrac{d}{dt}\bigl(\|u_{h}\|^{2} + \|u_{hx}\|^{2}\bigr)
+ \tfrac{1}{2}(u_{hx}^{2},u_{hx}) + (u_{h}u_{hxx},u_{hx}) = 0\ .
\]
Due to periodicity $(u_{h}u_{hxx},u_{hx}) 
= \tfrac{1}{2}\bigl(u_{h},\partial_{x}(u_{hx}^{2})\bigr) 
= -\tfrac{1}{2}(u_{hx},u_{hx}^{2})$. Therefore, 
$\tfrac{d}{dt}\|u_{h}\|_{1}^{2} = 0$, i.e. the $H^{1}$-norm of the semidiscrete
solution is conserved. By standard ode theory this implies that $u_{h}$ exists
in any finite temporal interval. \par
Let now $U=Q_{h}u$ and $\theta = U-u_{h}$. From \eqref{eq35}, \eqref{eq31} we
have 
\begin{equation}
A(\theta_{t},\phi) + 3(UU_{x} - u_{h}u_{hx},\phi) 
+ \tfrac{1}{2}(U_{x}^{2} - u_{hx}^{2},\phi') + (UU_{xx} - u_{h}u_{hxx},\phi')
= A(\psi,\phi), \quad \forall \phi \in S_{h}\ .
\label{eq39} 	
\end{equation}
Since
\begin{align*}
& UU_{x} - u_{h}u_{hx} = UU_{x} - (U - \theta)(U_{x} - \theta_{x}) 
= (U\theta)_{x} - \theta\theta_{x}\ , \\	
& U_{x}^{2} - u_{hx}^{2} = \theta_{x}(U_{x} + u_{hx}) = \theta_{x}(2U_{x} - \theta_{x}) 
= 2U_{x}\theta_{x} - \theta_{x}^{2}\ ,\\
& UU_{xx} - u_{h}u_{hxx} = UU_{xx} - (U - \theta)(U_{xx} - \theta_{xx})
= U\theta_{xx} + U_{xx}\theta - \theta\theta_{xx}\ ,
\end{align*}
\eqref{eq39} implies for $\phi = \theta$ that 
\[
\tfrac{1}{2}\tfrac{d}{dt}\|\theta\|_{1}^{2} + 3\bigl((U\theta)_{x},\theta\bigr)
+ (U_{x}\theta_{x},\theta_{x})
- \tfrac{1}{2}(\theta_{x}^{2},\theta_{x}) + (U\theta_{xx},\theta_{x})
+ (U_{xx}\theta,\theta_{x}) - (\theta\theta_{xx},\theta_{x}) = A(\psi,\theta)\ .
\] 
Using again integration by parts we get 
\[
\tfrac{1}{2}\tfrac{d}{dt}\|\theta\|_{1}^{2} + \tfrac{3}{2}(U_{x}\theta,\theta)
+ \tfrac{1}{2}(U_{x}\theta_{x},\theta_{x}) + (U_{xx}\theta,\theta_{x})
= A(\psi,\theta)\ .
\]
Therefore, by \eqref{eq27} and \eqref{eq34} it follows that 
\begin{equation}
\tfrac{d}{dt}\|\theta\|_{1}^{2} \leq C(h^{2r} + \|\theta\|_{1}^{2}),
\quad t\in [0,T]\ .	
\label{eq310}
\end{equation}
Since $\theta(0) = 0$ by \eqref{eq32}, Gronwall's lemma gives that 
$\|\theta\|_{1} \leq Ch^{r}$ on $[0,T]$; hence \eqref{eq38} follows from \eqref{eq21}.
\hfill $\qed$
\end{proof}
\textbf{Remark} The error estimate \eqref{eq38} is still valid if $u_{h}(0)$
is taken as the $H^{1}$ (`elliptic') projection of $u_{0}$ on $S_{h}$, i.e. defined as
$u_{h}(0) = \rmvs_{h}$, where
\begin{equation}
A(\rmvs_{h},\phi) = A(u_{0},\phi), \quad \forall \phi \in S_{h}\ .
\label{eq311}
\end{equation}
Indeed, if  $\ve=Q_{h}u_{0} - \rmvs_{h}$ we have for $\phi \in S_{h}$
\begin{equation}
A(\ve,\phi) = A(Q_{h}u_{0},\phi) - A(u_{0},\phi) = (Q_{h}u_{0} - u_{0},\phi)
+ (\gamma,\phi)\ ,	
\label{eq312}
\end{equation}
where $\gamma \in S_{h}$ is defined for $\phi\in S_{h}$ by
\[
(\gamma,\phi) = \bigl((Q_{h}u_{0})' - u_{0}',\phi')\bigr)\ .
\]
Therefore, by \eqref{eq24} for $1\leq i \leq N$,
\[
(\gamma,\phi_{i}) = \bigl((Q_{h}u_{0})' - u_{0}',\phi_{i}\bigr)
= - (Q_{h}u_{0}'' - u_{0}'',\phi_{i}) + \beta_{i}\ ,
\]
where $\max_{1\leq i \leq N}|\beta_{i}| = O(h^{2r-1})$. Hence, by \eqref{eq21} 
and \eqref{eq22}, $\|\gamma\|\leq Ch^{r}$. It follows from \eqref{eq312} 
with $\phi = \ve$ that $\|\ve\|_{1}^{2} \leq Ch^{r}\|\ve\|$, i.e. that
$\|\ve\|_{1} \leq Ch^{r}$. This implies for $\theta = U - u_{h} = Q_{h}u - u_{h}$, 
that $\|\theta(0)\|_{1} = \|Q_{h}u_{0} - \rmvs_{h}\|_{1} \leq Ch^{r}$ and the
application of Gronwall's lemma to (\ref{eq310}) gives again 
$\|\theta\|_{1}\leq Ch^{r}$ on $[0,T]$, i.e. that \eqref{eq38} still holds.
Computing $\rmvs_{h}$ from \eqref{eq311} requires just the usual B-spline basis and 
not the special basis $\{\phi_{i}\}$. \hfill $\qed$ 

\section{A modified Galerkin method for the periodic problem in system form}\label{sec4}

In this section we consider a \textit{modified} Galerkin method for the periodic ivp 
for CH, that works also if $r=2$, i.e. when the finite element space consists of 1-periodic,
continuous, piecewise linear functions on a uniform mesh in $[0,1]$, consequently
being a subspace of $H^{1}_{per}$. \par
For this purpose, following \cite{cper}, we write \eqref{eqchper} in system form for
two 1-periodic functions $m$ and $u$ as follows:
\begin{equation}{\tag{C-H-s-per}}
\begin{aligned}
\begin{aligned}
& m = u - u_{xx}\ , \\
& m_{t} + (mu)_{x} + mu_{x} = 0\ ,	
\end{aligned}
\,\,\,\,\, & \,\,\, 0\leq x\leq 1, \,\,\, 0\leq t\leq T,	\\
u(x,0) = u_{0}(x), \quad m(x,0) & = u_{0}(x) - u_{0}''(x), 
\quad 0\leq x\leq 1\ .
\end{aligned}
\label{eqchsper}
\end{equation}
We will discretize \eqref{eqchsper} in space using approximations $u_{h}$, $m_{h}$ of
$u$, $m$ that take values for $0\leq t\leq T$ in $S_{h}$ for $r\geq 2$. This will
yield a \textit{modified} Galerkin method defined for $0\leq t\leq T$ by 
\begin{align}
& (m_{h},\phi) = (u_{h},\phi) + (u_{hx},\phi'), \quad \forall \phi \in S_{h}\ ,	
\label{eq41} \\
& (m_{ht},\phi) + \bigl((m_{h}u_{h})_{x},\phi\bigr) + (m_{h}u_{hx},\phi)=0,
\quad \forall \phi \in S_{h}\ ,
\label{eq42}
\end{align}
with initial value
\begin{equation}
(m_{h}(0),\phi) = (u_{0},\phi) + (u_{0}',\phi'), \quad \phi \in S_{h}\ ,
\label{eq43}	
\end{equation}
i.e. with $m_{h}(x,0)$ taken as the $L^{2}$-projection of $m(x,0)$ on $S_{h}$.
Compatibility with (\ref{eq41}) and (\ref{eq43}) imply that $u_{h}(0)$ satisfies
for $\phi \in S_{h}$
\begin{equation}
A(u_{h}(0),\phi) = A(u_{0},\phi)\ ,
\label{eq44}	
\end{equation}
i.e. that $u_{h}(0)$ is the $H^{1}$ projection of $u_{0}$ in $S_{h}$. We first
establish the {\em{consistency}} of \eqref{eq41}-\eqref{eq43} with \eqref{eqchsper}.
\begin{lem}
Let $(m,u)$, the solution of \eqref{eqchsper} in $[0,1]\times[0,T]$, be sufficiently
smooth and suppose that $r\geq 2$. 
If $M = Q_{h}m$, $U = Q_{h}u$ and  $\psi$, $\zeta :[0.T]\to S_{h}$ are such that 
\begin{equation}
\begin{aligned}
& (U,\phi) + (U_{x},\phi') - (M,\phi) = (\psi,\phi), \quad \forall \phi \in S_{h},\\
& (M_{t},\phi) + \bigl((MU)_{x},\phi\bigr) + (MU_{x},\phi) = (\zeta,\phi)
\quad \forall \phi \in S_{h}\ ,
\end{aligned}
\label{eq45}	
\end{equation}
then, there exists a constant $C$ independent of $h$, such that 
\begin{equation}
\max_{0\leq t\leq T}(\|\psi(t)\| + \|\zeta(t)\|) \leq Ch^{r}\ .
\label{eq46}	
\end{equation}
\end{lem}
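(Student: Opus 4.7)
My plan is to test both equations in \eqref{eq45} against the Thom\'ee--Wendroff basis $\{\phi_i\}_{i=1}^N$ and use the super-convergence identities \eqref{eq23}--\eqref{eq24} to reduce each defect to a pointwise nodal residual that vanishes by the PDE; the inverse-type bound \eqref{eq22} then transfers a nodal $O(h^{r+1})$ estimate into the desired $L^2$ bound of order $h^r$.

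For $\psi$, periodic integration by parts on the continuous identity $m=u-u_{xx}$ gives $(m,\phi)=(u,\phi)+(u_x,\phi')$ for every smooth $\phi$, so subtracting this from the first line of \eqref{eq45} at $\phi=\phi_i$ leaves
\[(\psi,\phi_i)=\bigl(Q_hu-u,\phi_i\bigr)+\bigl((Q_hu)_x-u_x,\phi_i'\bigr)-\bigl(Q_hm-m,\phi_i\bigr).\]
Each quasiinterpolant inner product is computed via \eqref{eq23}: $(Q_hu,\phi_i)=hu(x_i)+O(h^{2r+1})$, $((Q_hu)_x,\phi_i')=-hu''(x_i)+O(h^{2r-1})$ (here $\nu+\kappa=2$ is even, so $j=1$), and $(Q_hm,\phi_i)=hm(x_i)+O(h^{2r+1})$. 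The smooth counterparts $(u,\phi_i)$, $(u_x,\phi_i')$, $(m,\phi_i)$ are obtained from \eqref{eq24} applied with $g\equiv1$ (using $Q_h1=1$, which follows from \eqref{eq21}), giving $(f,\phi_i)=hf(x_i)+O(h^{2r+1})$ and $(f,\phi_i')=-hf'(x_i)+O(h^{2r+1})$. The $h$-terms then collapse via $h[u(x_i)-u''(x_i)-m(x_i)]=0$, so $(\psi,\phi_i)=O(h^{2r-1})$ and \eqref{eq22} gives $\|\psi\|\le Ch^{2r-2}\le Ch^r$ since $r\ge2$.

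For $\zeta$, I integrate by parts once to write $(\zeta,\phi_i)=(M_t,\phi_i)-(MU,\phi_i')+(MU_x,\phi_i)$ and aim to match this against $h$ times the pointwise identity $m_t(x_i)+(mu)_x(x_i)+m(x_i)u_x(x_i)=0$. The time-derivative term gives $(M_t,\phi_i)=hm_t(x_i)+O(h^{2r+1})$ by \eqref{eq23}. For the two products of quasiinterpolants I split
\[MU=uM+(U-u)M,\qquad MU_x=mU_x+(M-m)U_x,\]
and treat the smooth-times-quasiinterpolant pieces with \eqref{eq24}: with $(f,g,\nu,\kappa)=(u,m,0,1)$, $-(uM,\phi_i')=(Q_h(um)_x,\phi_i)+O(h^{2r+1})=h(um)_x(x_i)+O(h^{2r+1})$; with $(m,u,1,0)$, $(mU_x,\phi_i)=hm(x_i)u_x(x_i)+O(h^{2r+1})$. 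The $h$-coefficients then cancel by the PDE, and the estimate is reduced to the two cross-terms $(M(U-u),\phi_i')$ and $((M-m)U_x,\phi_i)$.

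The main obstacle is that a direct Cauchy--Schwarz bound on these cross-terms loses a factor $h^{1/2}$, because $\|\phi_i'\|\le Ch^{-1/2}$. To recover it I integrate by parts, shifting the derivative off $\phi_i$:
\[(M(U-u),\phi_i')=-((Q_hm)_x(Q_hu-u),\phi_i)-(Q_hm\,(Q_hu-u)_x,\phi_i),\]
and in each resulting term I further split the remaining quasiinterpolant factor as its smooth counterpart plus its quasiinterpolation error. The smooth-times-error subterms (for instance $(m_x(Q_hu-u),\phi_i)$ and $(m(Q_hu-u)_x,\phi_i)$) are $O(h^{2r+1})$ by a further application of \eqref{eq24}, while the error-times-error subterms (for instance $((Q_hm-m)_x(Q_hu-u),\phi_i)$) are bounded directly via the approximation estimates \eqref{eq25}--\eqref{eq26}: e.g.\ $\|(Q_hm-m)_x\|\,\|Q_hu-u\|_\infty\,\|\phi_i\|\le Ch^{r-1}\cdot Ch^{r-1/2}\cdot Ch^{1/2}=Ch^{2r-1}=O(h^{r+1})$ for $r\ge2$. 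The companion cross-term $((M-m)U_x,\phi_i)$ is decomposed analogously via $U_x=u_x+(U-u)_x$. Assembling everything, $(\zeta,\phi_i)=O(h^{r+1})$, and \eqref{eq22} yields $\|\zeta\|\le Ch^r$, establishing \eqref{eq46}.
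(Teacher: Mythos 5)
Your proof is correct, and it reaches \eqref{eq46} by a genuinely different organization than the paper's. The paper first subtracts the exact weak formulation of \eqref{eqchsper} so that the defects are expressed purely in terms of $\rho=Q_hu-u$ and $\sigma=Q_hm-m$ (its \eqref{eq47}--\eqref{eq410}); most of the resulting terms are then bounded directly in $L^2$ via \eqref{eq21} and \eqref{eq25}--\eqref{eq26} after testing with $\phi=\psi$ or $\phi=\zeta$, and the superconvergence identities \eqref{eq22}, \eqref{eq24} are reserved for the few awkward terms (the auxiliary $\widetilde{\psi}$ and $\widetilde{\zeta}$) that carry a derivative of a quasiinterpolation error against a smooth weight. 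You instead run a uniform nodal-residual computation: every term is reduced via \eqref{eq23}--\eqref{eq24} to $h$ times a nodal value plus a high-order remainder, the leading parts cancel by the pointwise p.d.e.\ at the nodes, and \eqref{eq22} converts the nodal estimate into the $L^2$ bound. Both routes rest on the same Thom\'ee--Wendroff machinery; yours is more systematic but needs more bookkeeping, and your exponent count is right (nodal residuals of order $h^{2r-1}$ give $\|\psi\|+\|\zeta\|\leq Ch^{2r-2}\leq Ch^{r}$ for $r\geq 2$). Two small points deserve mention. First, your device for evaluating $(f,\phi_i^{(\kappa)})$ for smooth $f$ --- applying \eqref{eq24} with $g\equiv 1$ and $Q_h1=1$ --- is both legitimate and necessary (a crude Cauchy--Schwarz bound would lose $h^{1/2}$), and $Q_h1=1$ does indeed follow from \eqref{eq21}. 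Second, your error-times-error bounds use $\|\phi_i\|\leq Ch^{1/2}$, which is true for the Thom\'ee--Wendroff basis (uniformly bounded functions supported on $O(h)$ intervals) but is not among the properties listed in Section \ref{sec2}; the paper circumvents this via the remark following \eqref{eq22} (splitting off the $L^2$ projection of the residual), and you could do the same by treating, e.g., $\sigma_x\rho$ as the ``$f$'' of that remark, since $\|\sigma_x\rho\|\leq Ch^{2r-3/2}\leq Ch^{r}$ for $r\geq 2$.
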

\begin{proof}
Let $\rho = Q_{h}u - u = U - u$, and $\sigma = Q_{h}m - m = M - m$. Then
\begin{equation}
\begin{aligned}
& (\psi,\phi) = (\rho,\phi) + (\rho_{x},\phi') - (\sigma,\phi) 
= (\rho - \sigma,\phi) + (\wt{\psi},\phi), \quad \forall \phi \in S_{h}\ ,\\
& (\zeta,\phi) = (\sigma_{t},\phi) + \bigl((MU)_{x}- (mu)_{x},\phi\bigr)
+ (MU_{x} - mu_{x},\phi), \quad \forall \phi \in S_{h}\ ,
\end{aligned}
\label{eq47}	
\end{equation}
where $\wt{\psi} : [0,T] \to S_{h}$ satisfies
\begin{equation}
(\wt{\psi},\phi) = (\rho_{x},\phi'), \quad \forall \phi \in S_{h}\ .
\label{eq48}	
\end{equation}
Since
\begin{align*}
& MU - mu = (m + \sigma)(u + \rho) - mu = m\rho + u\sigma + \sigma\rho\ ,\\
& MU_{x} - mu_{x} = (m + \sigma)(u_{x} + \rho_{x}) - mu_{x} = m\rho_{x} 
+ u_{x}\sigma + \sigma\rho_{x}\ ,	
\end{align*}
it follows from the second equation of \eqref{eq47} that 
\begin{equation}
(\zeta,\phi) = (\sigma_{t},\phi) + (m_{x}\rho,\phi) + 2(u_{x}\sigma,\phi)
+ (\sigma_{x}\rho,\phi) + 2(\sigma\rho_{x},\phi) + (\wt{\zeta},\phi), 
\quad\forall \phi \in S_{h}\ ,
\label{eq49}	
\end{equation}
where $\wt{\zeta} : [0,T] \to S_{h}$ is defined as
\begin{equation}
(\wt{\zeta},\phi) = 2(m\rho_{x},\phi) + (u\sigma_{x},\phi), \quad
\forall \phi \in S_{h}\ .
\label{eq410}	
\end{equation}
It follows from \eqref{eq48} for $1\leq i\leq N$, 
\[
(\wt{\psi},\phi_{i}) = (\rho_{x},\phi_{i}') = \bigl((Q_{h}u)_{x},\phi_{i}'\bigr)
 - (u_{x},\phi_{i}') = - (Q_{h}u_{xx} - u_{xx},\phi_{i}) + \gamma_{i}\ ,
\]
where, from \eqref{eq24}, $\max_{1\leq i\leq N}|\gamma_{i}| \leq Ch^{2r-1}$.
Therefore, from \eqref{eq22} and \eqref{eq21} we see that 
\begin{equation}
\|\wt{\psi}\| \leq Ch^{r}\ .
\label{eq411}
\end{equation} 	
Putting $\phi = \psi$ in the first equation of \eqref{eq47} and taking into account
\eqref{eq21} and \eqref{eq411} we get 
\begin{equation}
\|\psi\| \leq Ch^{r}\ .
\label{eq412}	
\end{equation}
In addition, from \eqref{eq410} it follows for $1 \leq i\leq N$
\begin{align*}
(\wt{\zeta},\phi_{i}) & = 2\bigl(m(Q_{h}u)_{x},\phi_{i}\bigr) 
- 2(mu_{x},\phi_{i}) + \bigl(u(Q_{h}m)_{x},\phi_{i}\bigr) - (um_{x},\phi_{i})\\
& = 2\bigl(Q_{h}(mu_{x}) - mu_{x},\phi_{i}\bigr) 
+ \bigl(Q_{h}(um_{x}) - um_{x},\phi_{i}\bigr) + \wt{\gamma}_{i}\ ,	
\end{align*}
where, by \eqref{eq24}, $\max_{1\leq i\leq N}|\wt{\gamma}_{i}| \leq Ch^{2r+1}$.
Consequently, from \eqref{eq22} and \eqref{eq21} we have
\begin{equation}
\|\wt{\zeta}\| \leq Ch^{r}\ .
\label{eq413}	
\end{equation}
Finally, if we take $\phi = \zeta$ in \eqref{eq49}, we obtain, in view of \eqref{eq413},
\[
\|\zeta\| \leq Ch^{r}\ ,
\]
which, together with \eqref{eq412}, establishes \eqref{eq46}. \hfill $\qed$
\end{proof}
We proceed now to derive an $L^{2}$-error estimate for the solution of the
semidiscrete problem \eqref{eq41}-\eqref{eq43}.
\begin{prop}
If the solution $(m,u)$ of \eqref{eqchsper} on $[0,1]\times[0,T]$ is sufficiently
smooth, then, for $h$ sufficiently small, the semidiscrete problem 
\eqref{eq41}-\eqref{eq43} has, for $r\geq 2$, a unique solution on $[0,T]$ that satisfies
\begin{equation}
\max_{0\leq t\leq T}(\|m(t) - m_{h}(t)\| + \|u(t) - u_{h}(t)\|) \leq Ch^{r}\ .
\label{eq414}	
\end{equation}
\end{prop}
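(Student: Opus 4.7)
The approach is to compare $m_h, u_h$ with the Thom\'ee--Wendroff quasiinterpolants $M = Q_h m$ and $U = Q_h u$, and to bound the differences $\theta = M - m_h$ and $\eta = U - u_h$ via a coupled energy argument driven by the consistency bound (4.6). Local existence of the semidiscrete solution is immediate: equation (4.1), read as $(m_h,\phi) = A(u_h,\phi)$ on $S_h$, determines $m_h$ linearly from $u_h$ via the invertible Gram matrix of a basis of $S_h$, and substitution into (4.2) yields a quadratic ODE system in the nodal values of $u_h$. Continuation to all of $[0,T]$ will follow from the a priori bound below.

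Subtracting (4.1) from the first equation of (4.5) gives $A(\eta,\phi) = (\theta,\phi) + (\psi,\phi)$ for all $\phi \in S_h$. Setting $\phi = \eta$ and using (4.6) yields the preliminary elliptic estimate
\[
\|\eta\|_1 \leq \|\theta\| + C h^r,
\]
which reduces everything to bounding $\|\theta\|$ in $L^2$. At $t=0$, equation (4.3) makes $m_h(0)$ the $L^2$-projection of $m_0$ onto $S_h$, so $\theta(0) \in S_h$ is the $L^2$-projection of $Q_h m_0 - m_0$ and therefore $\|\theta(0)\| \leq \|Q_h m_0 - m_0\| \leq C h^r$ by (2.1).

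Subtracting (4.2) from the second equation of (4.5), expanding the nonlinear differences as $MU - m_h u_h = M\eta + U\theta - \theta\eta$ and $MU_x - m_h u_{hx} = M\eta_x + U_x\theta - \theta\eta_x$, and taking $\phi = \theta$, I integrate by parts using the periodic identity $(\chi\theta_x,\theta) = -\tfrac{1}{2}(\chi_x,\theta^2)$ to eliminate every occurrence of $\theta_x$. After collecting terms the energy identity takes the schematic form
\[
\tfrac{1}{2}\tfrac{d}{dt}\|\theta\|^2 = (\zeta,\theta) + \Lambda(\eta,\theta) - \tfrac{3}{2}(\eta_x,\theta^2) + R(\theta,\theta),
\]
where $\Lambda$ is linear in $\eta$ and $R$ quadratic in $\theta$, with coefficients $M,U,M_x,U_x$ uniformly bounded by (2.7). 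The terms $\Lambda$, $R$ and the consistency contribution $(\zeta,\theta)$ are all controlled by $C(\|\theta\|^2 + h^{2r})$ via the elliptic bound above and (4.6).

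The essential difficulty is the cubic term $(\eta_x,\theta^2)$, which cannot be absorbed into $\|\theta\|^2$ alone. Using $|(\eta_x,\theta^2)| \leq \|\eta_x\|\|\theta\|_\infty\|\theta\|$ together with the inverse inequality $\|\theta\|_\infty \leq C h^{-1/2}\|\theta\|$ and the elliptic bound produces an unwelcome $Ch^{-1/2}\|\theta\|^3$ contribution. I close the estimate by a bootstrap: since $\|\theta(0)\| \leq C h^r \leq C h^2 \ll h^{3/2}$ for $h$ small, the set $I = \{t \leq T_h : \|\theta(s)\| \leq h^{3/2}\ \forall s \in [0,t]\}$ is a nonempty closed interval. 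On $I$ the cubic term satisfies $C h^{-1/2}\|\theta\|^3 \leq \|\theta\|^2$ and the energy inequality reduces to $\tfrac{d}{dt}\|\theta\|^2 \leq C\|\theta\|^2 + C h^{2r}$; Gronwall then gives $\|\theta(t)\| \leq C h^r$ throughout $I$. Since $C h^r \leq \tfrac{1}{2} h^{3/2}$ for $r \geq 2$ and $h$ small, a standard continuation argument forces $I = [0,T]$, and combining with $\|\eta\|_1 \leq \|\theta\| + Ch^r$ and the triangle inequality using (2.1) yields (4.14). The main obstacle throughout is exactly this cubic term: without the inverse inequality one lacks the $L^\infty$ control needed for $(\eta_x,\theta^2)$, and without the bootstrap the resulting $h^{-1/2}\|\theta\|^3$ would prevent a direct Gronwall closure, especially at the critical case $r=2$.
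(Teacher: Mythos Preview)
Your proof is correct and follows essentially the same route as the paper: compare $m_h,u_h$ with the quasiinterpolants $M=Q_h m$, $U=Q_h u$, use the elliptic identity from (4.1) to bound $\|\eta\|_1$ by $\|\theta\|$, run the $L^2$ energy estimate on the $m$-error $\theta=M-m_h$, and close the cubic term $(\eta_x,\theta^2)$ by a bootstrap plus Gronwall. The only cosmetic difference is that the paper bootstraps directly on $\|\theta\|_\infty\le 1$ (your $\theta$ is the paper's $\xi$), so that $|(\eta_x,\theta^2)|\le\|\eta_x\|\,\|\theta\|_\infty\,\|\theta\|\le\|\eta\|_1\|\theta\|$ is absorbed into $C(\|\theta\|^2+h^{2r})$ without ever invoking the inverse inequality or producing the intermediate $h^{-1/2}\|\theta\|^3$ term.
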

\begin{proof}
For every $t\geq 0$ we may solve the linear discrete problem \eqref{eq41} and
express $u_{h}$ in terms of $m_{h}$. Upon substituting $u_{h}$ in \eqref{eq42}, we
easily see that the resulting initial-value problem \eqref{eq42}-\eqref{eq43}
has a unique solution $m_{h}$ locally in $t$. While this solution exists, putting
$M = Q_{h}m$, $U=Q_{h}u$, $\theta = U - u_{h}$, and $\xi = M - m_{h}$, from
\eqref{eq45}, \eqref{eq41}, \eqref{eq42} we obtain 
\begin{equation}
\begin{aligned}
& (\theta,\phi) + (\theta_{x},\phi') - (\xi,\phi) = (\psi,\phi), 
\quad \forall \phi \in S_{h}\ ,\\
& (\xi_{t},\phi) + \bigl((MU - m_{h}u_{h})_{x}, \phi\bigr) + (MU_{x} - m_{h}u_{hx},\phi) 
= (\zeta,\phi), \quad \forall \phi \in S_{h}\ .
\end{aligned}
\label{eq415}	
\end{equation}
With $\phi = \theta$ in the first equation above we see, in view of \eqref{eq46},
that
\begin{equation}
\|\theta\|_{1} \leq \|\xi\| + Ch^{r}\ .
\label{eq416}	
\end{equation}
Now
\begin{align*}
& MU - m_{h}u_{h} = MU - (M - \xi)(U - \theta) = M\theta + U\xi - \theta\xi\ ,\\
& MU_{x} - m_{h}u_{hx} = MU_{x} - (M - \xi)(U_{x} - \theta_{x}) 
= M\theta_{x} + U_{x}\xi - \theta_{x}\xi\ .	
\end{align*}
Hence,  from the second equation in \eqref{eq415}, putting $\phi = \xi$ we obtain
\[
\tfrac{1}{2}\tfrac{d}{dt}\|\xi\|^{2} + \bigl((M\theta)_{x},\xi\bigr)
+ \bigl((U\xi)_{x},\xi\bigr) - \bigl((\theta\xi)_{x},\xi\bigr) 
+ (M\theta_{x},\xi) + (U_{x}\xi,\xi) - (\theta_{x}\xi,\xi) = (\zeta,\xi)\ .
\]
Using integration by parts we see that \small
$\bigl((U\xi)_{x},\xi\bigr) = - (U\xi,\xi_{x}) = \tfrac{1}{2}(U_{x}\xi,\xi)$, 
\normalsize and
similarly, \small $\bigl((\theta\xi)_{x},\xi\bigr) = \tfrac{1}{2}(\theta_{x}\xi,\xi)$.
\normalsize
Therefore, we may write the above equation as
\[
\tfrac{1}{2}\tfrac{d}{dt}\|\xi\|^{2} + (M_{x}\theta,\xi) + 2(M\theta_{x},\xi)
+ \tfrac{3}{2}(U_{x}\xi,\xi) - \tfrac{3}{2}(\theta_{x}\xi,\xi) = (\zeta,\xi)\ .
\]  
Consequently, from \eqref{eq27}, \eqref{eq46}, \eqref{eq416}, we get for some
constant $C$ independent of $h$, that
\begin{equation}
\tfrac{d}{dt}\|\xi\|^{2} \leq C(\|\xi\|^{2} + h^{r}\|\xi\|) 
+ 3|(\theta_{x}\xi,\xi)|\ .
\label{eq417}	
\end{equation}
The definition of $m_{h}(0)$ by \eqref{eq43} implies that for
$\xi(0) = Q_{h}m(0) - m_{h}(0)$ we have if $1\leq i\leq N$
\begin{align*}
\bigl(\xi(0),\phi_{i}\bigr) & = \bigl(Q_{h}m(0)-m_{h}(0),\phi_{i}\bigr)  
= \bigl(Q_{h}(u_{0} - u_{0}''),\phi_{i}\bigr) - (u_{0},\phi_{i})
- (u_{0}',\phi_{i}') \\
& = (Q_{h}u_{0} - u_{0},\phi_{i}) - (Q_{h}u_{0}'' - u_{0}'',\phi_{i})\ .
\end{align*}
Therefore, by \eqref{eq22} and \eqref{eq21} we conclude that
\begin{equation}
\|\xi(0)\| \leq C h^{r}\ ,
\label{eq418}	
\end{equation}
from which, by inverse properties, we see that $\|\xi(0)\|_{\infty} = O(h^{r-1/2})$. By
continuity we may infer that there is a maximal time $t_{h}\in (0,T]$ such that
the solution of \eqref{eq41}-\eqref{eq43} exists for $0\leq t\leq t_{h}$ and
satisfies
\begin{equation}
\max_{0\leq t\leq t_{h}}\|\xi(t)\|_{\infty} \leq 1\ .
\label{eq419}	
\end{equation}
Therefore, from \eqref{eq417} it follows that
\[
\tfrac{d}{dt}\|\xi\|^{2} \leq C(\|\xi\|^{2} + h^{r}\|\xi\|) 
+ 3\|\theta\|_{1}\|\xi\| \leq C(h^{2r} + \|\xi\|^{2})\ ,
\]
for $t\leq t_{h}$. From this inequality and Gronwall's lemma we infer that
\[
\|\xi(t)\| \leq C_{T} (\|\xi(0)\| + h^{r})\ ,
\]
for $t\leq t_{h}$. Hence, by \eqref{eq418} 
\begin{equation}
\|\xi(t)\| \leq C_{T}h^{r}, \quad 0\leq t\leq t_{h}\ ,
\label{eq420}	
\end{equation}
from which, by inverse properties we conclude that
\[
\|\xi(t)\|_{\infty} \leq C_{T}h^{r-1/2}\ ,
\]
for $t\leq t_{h}$. Therefore, if $h$ is taken sufficiently small, $t_{h}$ is not
maximal and one can consequently take $t_{h}=T$. Hence, \eqref{eq420} holds
for $t\in [0,T]$. By \eqref{eq416}, $\|\theta\|_{1}\leq Ch^{r}$ on $[0,T]$
and \eqref{eq414} follows in view of \eqref{eq21}. 	\hfill $\qed$
\end{proof}

\section{A Galerkin method for the initial-boundary-value problem}\label{sec5}

In this  section we consider the following initial-boundary value problem (ibvp) for the 
Camassa-Holm equation. For $0\leq t\leq T$ and $0\leq x\leq 1$ we seek $u=u(x,t)$ satisfying
\begin{equation}{\tag{C-H-b}}
\begin{aligned}
u_{t}-u_{txx} + 3uu_{x} & = 2u_{x}u_{xx} +uu_{xxx}, \,\,\,\,\, 0\leq x\leq 1, \,\,\,\,\, 
0\leq t\leq T\ , \\
u(x,0) &  = u_{0}(x), \,\,\,\,\, 0\leq x\leq 1\ ,\\	
u(0,t) = u_{xx}(0,t) & = u(1,t) = u_{xx}(1,t) = 0, \,\,\,\,\, 0\leq t\leq T\ .
\end{aligned}
\label{eqchb}
\end{equation}
This ibvp was first analyzed by Kwek \textit{et al.} \cite{k}, who showed that there exists $T>0$ 
such that it has a unique solution which is a continuous map of $[0,T]$ into $\mathcal{B}$,
where 
$\mathcal{B} = \{\rmvs\in H^{4}(0,1): \rmvs(0)=\rmvs''(0)=\rmvs(1)=\rmvs''(1)=0\}$. 
The form of the boundary
conditions suggests that the problem should be naturally studied in its $m$-$u$ system form,
and this was actually done in \cite{k}. Consequently, we seek $u=u(x,t)$ and $m=m(x,t)$,
defined for $(x,t) \in [0,1]\times[0,T]$ and satisfying 
\begin{equation}{\tag{C-H-s-b}}
\begin{aligned}
\begin{aligned}
& m = u - u_{xx}\ , \\
& m_{t} + um_{x} + 2u_{x}m = 0\ ,	
\end{aligned}
\,\,\,\,\, & \,\,\, 0\leq x\leq 1, \,\,\, 0\leq t\leq T\ ,	\\
m(0,t) = m(1,t) = u(0,&\, t)  =u(1,t) = 0, \,\,\, 0\leq t\leq T\ , \\
u(x,0) = u_{0}(x), \,\,\, m(x,0) & = m_{0}(x), \,\,\, 0\leq x\leq 1\ ,
\end{aligned}
\label{eqchsb}
\end{equation}
where $m_{0} = u_{0} - u_{0}''$. \par
In the sequel we will analyze a Galerkin-finite element method for the ibvp \eqref{eqchsb} and
also make some remarks on the numerical solution of the ibvp \eqref{eqchb} for the single equation
by a standard Galerkin method. We assume that these ibvp's possess solutions that are smooth
enough for the purposes of the error estimation. \par
Letting $A(\cdot,\cdot)$ be the bilinear form considered in Section \ref{sec3}, but now defined for
$\rmvs$, $w \in \mathring{H}^{1}$ by 
\begin{equation}
A(\rmvs,w) = (\rmvs,w) + (\rmvs',w')\ ,
\label{eq51}
\end{equation}
i.e. as the $H^{1}$ inner product, we may write \eqref{eqchsb} in weak form for $0\leq t\leq T$ as
\begin{equation}
(m,\rmvs) = A(u,\rmvs), \,\,\,\, \forall \rmvs\in \mathring{H}^{1}\ ,
\label{eq52}
\end{equation}
and
\begin{equation}
(m_{t},\rmvs) = - (um_{x},\rmvs) - (2u_{x}m,\rmvs), \,\,\,\, \forall \rmvs \in \mathring{H}^{1}\ .
\label{eq53}
\end{equation}
In order to define the numerical method, let $0=x_{1}<x_{2}<\dots<x_{N+1}=1$ be a quasiuniform
partition of $[0,1]$ with maximum meshlength $h$. For integers $r\geq 2$ and $0\leq\mu\leq r-2$ 
let $S_{h}^{r,\mu} = \{\phi \in C^{\mu}[0,1]: \phi |_{[x_{i},x_{i+1}]} \in \mathbb{P}_{r-1},
\, i=1,2,\dots,N\}$ and $S_{h} = S_{h}^{r,\mu} \cap \mathring{H}^{1}$. By 
$R_{h} : \mathring{H}^{1} \to S_{h}$ denote the elliptic projection onto $S_{h}$ defined as
usual for $\rmvs \in \mathring{H}^{1}$ as $A(R_{h}\rmvs,\phi) = A(\rmvs,\phi)$ for all 
$\phi \in S_{h}$ and
let $P_{h} : L^{2} \to S_{h}$ be the $L^{2}$ projection onto $S_{h}$. From well-known approximation
properties of the finite element spaces $S_{h}$, \cite{df}, \cite{s}, it follows that
\begin{align}
\|R_{h}\rmvs - \rmvs\|_{j} & \leq Ch^{k-j}\|\rmvs\|_{k},\,\,\,\, j=0,1,\,\,\, 2\leq k\leq r,\,\,\, 
\rmvs \in H^{k}\cap H_{0}^{1}\ , 
\label{eq54} \\
\|P_{h}\rmvs - \rmvs\|_{j} & \leq Ch^{k-j}\|\rmvs\|_{k}, \,\,\, j=0,1, \,\,\, 2\leq k\leq r,\,\,\,
\rmvs \in H^{k}\cap H_{0}^{1}\ .
\label{eq55}
\end{align} 
We may also asume the following stability properties of $R_{h}$ and $P_{h}$, valid for functions
in the spaces implied by the indicated norms, that also vanish at $x=0$ and $x=1$.
\begin{align}
\|R_{h}\rmvs\|_{\infty} & \leq C\|\rmvs\|_{\infty}, \quad \|R_{h}\rmvs\|_{1,\infty} \leq C\|\rmvs\|_{1,\infty}\ ,
\label{eq56} \\
\|P_{h}\rmvs\|_{1} & \leq C\|\rmvs\|_{1}, \quad \, \|P_{h}\rmvs\|_{\infty} \leq C\|\rmvs\|_{\infty}\ .
\label{eq57}
\end{align}
(For the stability of $R_{h}$ in the $\|\cdot\|_{\infty}$ norm and of $P_{h}$ in $\mathring{H}^{1}$
cf. \cite{twa}. For the stability of $P_{h}$ in $L^{\infty}$ cf. \cite{ddw}, while for that of
$R_{h}$ in the $\|\cdot\|_{1,\infty}$ norm, cf. \cite{rs}, \cite{adm}.) 
In addition, note that the inverse inequalities $\|\chi\|_{1}\leq Ch^{-1}\|\chi\|$,
$\|\chi\|_{j,\infty}\leq Ch^{-(j+1/2)}\|\chi\|$, $j=0,1$, hold for $\chi \in S_{h}$. The 
semidiscrete approximation of \eqref{eqchsb} is now defined as follows. We seek
$m_{h}$, $u_{h} : [0,T] \to S_{h}$ satisfying for $0\leq t\leq T$
\begin{align}
&(m_{h},\phi) = A(u_{h},\phi), \quad \forall\phi \in S_{h}\ ,
\label{eq58}\\
&(m_{ht},\chi) = -(u_{h}m_{hx},\chi) - 2(u_{hx}m_{h},\chi), \quad \forall \chi \in S_{h}\ ,
\label{eq59}
\end{align}
with initial conditions
\begin{equation}
m_{h}(0) = P_{h}(u_{0} - u_{0}''), \quad u_{h}(0) = R_{h}u_{0}\ .
\label{eq510}
\end{equation}
\begin{prop}\label{prop51} Let $r\geq 2$ and suppose that the ibvp \eqref{eqchsb} has a unique solution
$(m,u)$, with $m\in C(0,T;C^{r-2} \cap \mathring{H}^{1})$, 
$u\in C(0,T;C^{r}\cap\mathring{H}^{1})$.
Then, if $h$ is sufficiently small, the semidiscrete problem \eqref{eq58}-\eqref{eq510} has a
unique solution $(m_{h}, u_{h})$ in $[0,T]$ for which 
\begin{equation}
\max_{0\leq t\leq T}(\|m(t) - m_{h}(t)\| + \|u(t) - u_{h}(t)\|_{1}) \leq Ch^{r-1}\ .
\label{eq511}
\end{equation}
\end{prop}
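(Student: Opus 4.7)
My plan is to combine an $L^2$-energy estimate for $\xi = P_h m - m_h$ with the elliptic coupling between $\xi$ and $\eta = R_h u - u_h$, which supplies the key auxiliary bound $\|\eta\|_1 \leq \|\xi\|$. Set $\rho = u - R_h u$, $\sigma = m - P_h m$, $\eta = R_h u - u_h$, $\xi = P_h m - m_h$, so that $u - u_h = \rho + \eta$ and $m - m_h = \sigma + \xi$. The approximation properties (5.4)--(5.5) give $\|\rho\|_1 \leq Ch^{r-1}$ and $\|\sigma\| \leq Ch^{r-1}$, while (5.10) ensures $\eta(0) = \xi(0) = 0$. Local-in-time existence of $(m_h, u_h)$ follows from standard ODE theory once one uses the invertible elliptic equation (5.8) to eliminate $u_h$ from (5.9).

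Subtracting (5.8) from the weak form (5.2) of $m = u - u_{xx}$, and using $A(\rho, \phi) = 0$ and $(\sigma, \phi) = 0$ for $\phi \in S_h$, I obtain
\[
(\xi, \phi) = A(\eta, \phi), \quad \phi \in S_h.
\]
Taking $\phi = \eta$ gives $\|\eta\|_1^2 = (\xi, \eta) \leq \|\xi\|\|\eta\|_1$, whence $\|\eta\|_1 \leq \|\xi\|$; this is the crucial inequality that will let me close the energy estimate on $\|\xi\|$ alone. Differentiating the same identity in $t$ and subtracting (5.9) from (5.3) yields, for $\phi \in S_h$,
\[
(\xi_t, \phi) = -(um_x - u_h m_{hx}, \phi) - 2(u_x m - u_{hx}m_h, \phi).
\]

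The main step is then to take $\phi = \xi$ and expand via $um_x - u_h m_{hx} = u(\sigma+\xi)_x + (\rho+\eta)m_{hx}$ and $u_x m - u_{hx}m_h = (\rho+\eta)_x m + u_{hx}(\sigma+\xi)$. The self-interactions $(u\xi_x, \xi) = -\tfrac12(u_x\xi, \xi)$ and $(u_{hx}\xi, \xi)$ reduce by integration by parts to $C\|\xi\|^2$. The $\rho$-terms are treated by moving the derivative off $\rho$ through IBP (so only $\|\rho\|$, not $\|\rho_x\|$, enters), giving $O(h^r\|\xi\|)$. Terms linear in $\eta$ yield $C\|\eta\|\|\xi\|$, absorbed into $C\|\xi\|^2$ via $\|\eta\|_1 \leq \|\xi\|$. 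The main technical obstacle is the $(u\sigma_x, \xi)$ term (and analogously $(u_{hx}\sigma, \xi)$): since $\|\sigma_x\|$ is only $O(h^{r-2})$, a direct bound is too weak. My plan is to integrate by parts (using that $u$, $\sigma$, and $\xi$ vanish at the endpoints) to rewrite
\[
-(u\sigma_x, \xi) = (\sigma, u_x\xi) + (\sigma, u\xi_x),
\]
and then exploit the orthogonality $(\sigma, \phi) = 0$ for $\phi \in S_h$ together with a superapproximation-type estimate $\|v\chi - P_h(v\chi)\| \leq Ch\|v\|_{W^{1,\infty}}\|\chi\|$ applied to $u_x\xi$ and $u\xi_x$; combined with the inverse inequality $\|\xi_x\| \leq Ch^{-1}\|\xi\|$, this produces the bound $|-(u\sigma_x, \xi)| \leq Ch^{r-1}\|\xi\|$, and the analogous manipulation controls $(u_{hx}\sigma, \xi)$. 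Under a bootstrap hypothesis $\|u_h\|_{1,\infty} + \|m_h\|_\infty \leq C$, all contributions combine to give
\[
\tfrac{d}{dt}\|\xi\|^2 \leq C\bigl(h^{2(r-1)} + \|\xi\|^2\bigr).
\]

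Finally, I close via the continuation argument of Proposition 4.3: by continuity and $\xi(0) = \eta(0) = 0$, the bootstrap holds on a maximal subinterval $[0, t_h] \subset [0, T]$. There, Gronwall combined with $\xi(0) = 0$ gives $\|\xi(t)\| \leq Ch^{r-1}$; the inverse inequality then yields $\|\xi\|_\infty \leq Ch^{r-3/2}$, keeping $\|m_h\|_\infty$ within twice $\|m\|_\infty$ for $r \geq 2$ and $h$ small. The elliptic identity gives $\|\eta\|_1 \leq \|\xi\| \leq Ch^{r-1}$, controlling $\|u_h\|_{1,\infty}$ analogously and closing the bootstrap, so $t_h = T$. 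The estimate (5.11) then follows from the triangle inequalities $\|u - u_h\|_1 \leq \|\rho\|_1 + \|\eta\|_1 \leq Ch^{r-1}$ and $\|m - m_h\| \leq \|\sigma\| + \|\xi\| \leq Ch^{r-1}$.
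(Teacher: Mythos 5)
Your overall architecture is the paper's: the same four-way splitting $\rho=u-R_hu$, $\sigma=m-P_hm$, $\eta=R_hu-u_h$, $\xi=P_hm-m_h$ (the paper writes $\theta$ for your $\eta$), the same elliptic identity $(\xi,\phi)=A(\eta,\phi)$ yielding $\|\eta\|_1\le\|\xi\|$, and the same energy--Gronwall--continuation closure. The genuine problem is your algebraic splitting of the nonlinear differences, $um_x-u_hm_{hx}=u(\sigma+\xi)_x+(\rho+\eta)m_{hx}$, which puts the \emph{discrete} derivative $m_{hx}$ in the coefficient position, while your bootstrap hypothesis controls only $\|m_h\|_\infty$, not $\|m_{hx}\|_\infty$. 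The term $(\eta\,m_{hx},\xi)$ is therefore not "linear in $\eta$ yielding $C\|\eta\|\|\xi\|$" as claimed. Integrating by parts gives $-(m_h,\eta_x\xi)-(m_h,\eta\xi_x)$, whose second piece is only bounded by $C\|\eta\|\,\|\xi_x\|\le Ch^{-1}\|\xi\|^2$, which Gronwall cannot absorb; and trying to add $\|m_{hx}\|_\infty\le C$ to the bootstrap fails a posteriori for $r=2$, since $\|\xi_x\|_\infty\le Ch^{-3/2}\|\xi\|\le Ch^{r-5/2}$ blows up there. Since $r=2$ (piecewise linears) is exactly the case this system formulation is meant to cover, this is a real gap. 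The repair is the paper's decomposition $um_x-u_hm_{hx}=(m_x-m_{hx})u+(u-u_h)m_x-(u-u_h)(m_x-m_{hx})$, which keeps the \emph{exact} $m_x$ (bounded by hypothesis) as the coefficient of $u-u_h$ and isolates the quadratic remainder; its worst piece $(\eta\xi_x,\xi)=-\tfrac12(\eta_x,\xi^2)\le\|\eta\|_1\|\xi\|_\infty\|\xi\|\le C\|\xi\|^2$ is then handled by the bootstrap $\|\xi\|_\infty\le1$.

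On the term $(u\sigma_x,\xi)$ your route genuinely differs from the paper's: the paper simply bounds $|(u\sigma_x+2u_x\sigma,\xi)|\le(\|u\|_\infty\|\sigma\|_1+2\|u_x\|_\infty\|\sigma\|)\|\xi\|\le Ch^{r-1}\|\xi\|$, which tacitly uses $\|P_hm-m\|_1\le Ch^{r-1}\|m\|_r$, i.e., one more derivative of $m$ than your argument needs. Your integration by parts plus $L^2$-orthogonality of $\sigma$ plus superapproximation recovers the same $Ch^{r-1}\|\xi\|$ from $\|\sigma\|\le Ch^{r-1}$ alone, which is a legitimate and somewhat more economical alternative; two caveats, though: you apply the superapproximation estimate to $u\xi_x$, and $\xi_x\notin S_h$ (it has one degree less smoothness and need not vanish at the endpoints), so the product estimate you invoke requires separate justification for smooth splines; and for $(u_{hx}\sigma,\xi)$ no such machinery is needed at all, since no derivative falls on $\sigma$ and $\|u_{hx}\|_\infty\|\sigma\|\,\|\xi\|\le Ch^{r-1}\|\xi\|$ suffices.
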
 
\begin{proof} Let $\rho = u - R_{h}u$, $\theta = R_{h}u-u_{h}$, $\sigma=m - P_{h}m$, and
$\xi = P_{h}m - m_{h}$. Then $u -u_{h} = \rho + \theta$, $m - m_{h} = \sigma + \xi$, and it
suffices to show that for $0\leq t\leq T$
\[
\|\xi\| + \|\theta\|_{1} \leq Ch^{r-1}\ .
\]
It is clear that the initial value o.d.e. problem represented by \eqref{eq58}-\eqref{eq510} has a
unique solution $(m_{h}, u_{h})$ locally in $t$. While this solution exists, subtracting
\eqref{eq58} from \eqref{eq52} (with $\rmvs = \phi$) and \eqref{eq59} from \eqref{eq53} 
(with $w=\chi$) gives
\begin{align}
& (\xi,\phi) = A(\theta,\phi), \quad \forall \phi \in S_{h}\ ,
\label{eq512} \\
& (\xi_{t},\chi) = -(um_{x} - u_{h}m_{hx},\chi) - 2(u_{x}m - u_{hx}m_{h},\chi), \quad
\forall \chi \in S_{h}\ .
\label{eq513}
\end{align}
Since
\begin{align}
um_{x} - u_{h}m_{hx} & = (m_{x}-m_{hx})u + (u - u_{h})m_{x} - (u-u_{h})(m_{x}-m_{hx})\ ,
\label{eq514} \\
u_{x}m - u_{hx}m_{h} & = (m - m_{h})u_{x} + (u_{x} - u_{hx})m - (u_{x}-u_{hx})(m - m_{h})\ ,
\label{eq515}
\end{align} 
it follows that 
\begin{align*}
um_{x} - u_{h}m_{hx} & = \sigma_{x}u + \xi_{x}u +\rho m_{x} + \theta m_{x} 
- \rho\sigma_{x} - \rho\xi_{x} - \theta\sigma_{x} - \theta\xi_{x}\ ,\\
u_{x}m - u_{hx}m_{h} & = \sigma u_{x} + \xi u_{x} + \rho_{x}m + \theta_{x}m
- \rho_{x}\sigma - \rho_{x}\xi - \theta_{x}\sigma - \theta_{x}\xi\ ,
\end{align*}
and therefore
\begin{align*}
(\xi_{t},\chi) = & - (\sigma_{x}u+2\sigma u_{x},\chi) - (\xi_{x}u+2\xi u_{x},\chi)
-(\rho m_{x}+2\rho_{x}m,\chi) - (\theta m_{x} + 2\theta_{x}m,\chi) \\
& + (\rho\sigma_{x}+2\rho_{x}\sigma,\chi) + (\rho\xi_{x} + 2\rho_{x}\xi,\chi)
+(\theta\sigma_{x}+2\theta_{x}\sigma,\chi) + (\theta\xi_{x} + 2\theta_{x}\xi,\chi),
\,\,\, \forall \chi \in S_{h}\ .
\end{align*}
Putting $\chi=\xi$ in this relation we get
\begin{equation}
(\xi_{t},\xi) = - w_{1} - w_{2} - w_{3} - w_{4} + w_{5} + w_{6} + w_{7} + w_{8}\ ,
\label{eq516}
\end{equation}
where
\begin{align*}
w_{1} & = (\sigma_{x}u + 2\sigma u_{x},\xi), \,\,\, w_{2} = \tfrac{3}{2}(\xi u_{x},\xi),
\,\,\, w_{3} = (\rho m_{x} + 2\rho_{x}m,\xi), \,\,\, w_{4} = (\theta m_{x} + 2\theta_{x}m,\xi)\ ,\\
w_{5} & = (\rho\sigma_{x} + 2\rho_{x}\sigma,\xi)\,\,\, w_{6} = \tfrac{3}{2}(\rho_{x}\xi,\xi),\,\,\,
w_{7} = (\theta\sigma_{x} + 2\theta_{x}\sigma,\xi),\,\,\, w_{8} = \tfrac{3}{2}(\theta_{x}\xi,\xi)\ .
\end{align*}
Taking into account \eqref{eq54} and \eqref{eq55} we may estimate $w_{1}$, $w_{2}$, $w_{3}$ as
follows:
\begin{equation}
\begin{aligned}
\abs{w_{1}} & \leq (\|\sigma_{x}\|\|u\|_{\infty} + 2\|\sigma\|\|u_{x}\|_{\infty})\|\xi\| 
\leq Ch^{r-1}\|\xi\|\ , \\
\abs{w_{2}} & \leq \tfrac{3}{2}\|u_{x}\|_{\infty}\|\xi\|^{2} \leq C\|\xi\|^{2}\ , \\
\abs{w_{3}} & \leq (\|\rho\|\|m_{x}\|_{\infty} + 2\|\rho_{x}\|\|m\|_{\infty})\|\xi\|
\leq Ch^{r-1}\|\xi\|\ .
\end{aligned}
\label{eq517}
\end{equation}
To estimate $w_{4}$ we take into account \eqref{eq512}, which implies that 
\begin{equation}
\|\theta\|_{1} \leq \|\xi\|\ .
\label{eq518}
\end{equation}
Hence 
\begin{equation}
\abs{w_{4}} \leq (\|m_{x}\|_{\infty}\|\theta\| + 2\|m\|_{\infty}\|\theta_{x}\|)\|\xi\|
\leq C\|\xi\|^{2}\ .
\label{eq519}
\end{equation}
For $w_{5}$ and $w_{6}$, using the stability of $R_{h}$ in $L^{\infty}$ and $W_{\infty}^{1}$,
cf. \eqref{eq56}, and \eqref{eq55} gives
\begin{align}
\abs{w_{5}} & \leq (\|\rho\|_{\infty} \|\sigma_{x}\| + 2\|\rho_{x}\|_{\infty}\|\sigma\|)\|\xi\|
\leq C h^{r-1} \|\xi\|\ , 
\label{eq520} \\
\abs{w_{6}} & \leq \tfrac{3}{2}\|\rho_{x}\|_{\infty}\|\xi\|^{2} \leq C\|\xi\|^{2}\ .
\label{eq521}
\end{align}
To estimate $w_{7}$, by Sobolev's inequality, \eqref{eq518}, and the stability of $P_{h}$
in $\mathring{H}^{1}$ and $L^{\infty}$, cf. \eqref{eq57}, we get
\begin{equation}
\abs{w_{7}} \leq (\|\theta\|_{\infty}\|\sigma_{x}\| + 2\|\theta_{x}\|\|\sigma\|_{\infty})\|\xi\|
\leq C\|\xi\|^{2}\ .
\label{eq522}
\end{equation}
In order to estimate $w_{8}$, we let $0<t_{h}<T$ be the maximal time such that the solution of
\eqref{eq58}-\eqref{eq510} exists and satisfies 
$\max_{0\leq t\leq t_{h}}\|\xi(t)\|_{\infty}\leq 1$. Then, by \eqref{eq518}
\begin{equation}
\abs{w_{8}} \leq \tfrac{3}{2}\|\theta_{x}\|\|\xi\|_{\infty} \|\xi\| \leq C\|\xi\|^{2}\ .
\label{eq523}
\end{equation}
In view of \eqref{eq517}-\eqref{eq523}, \eqref{eq516} gives for $0\leq t\leq t_{h}$
\[
\tfrac{d}{dt}\|\xi(t)\|^{2} \leq C(h^{2r-2} + \|\xi(t)\|^{2})\ .
\]
Hence Gronwall's lemma implies, for some constant $C=C(T)$, that
\begin{equation}
\|\xi(t)\| \leq Ch^{r-1}, \quad 0\leq t\leq t_{h}\ .
\label{eq524}
\end{equation} 
Since $\|\xi(t)\|_{\infty} \leq Ch^{-1/2}\|\xi\|\leq Ch^{r-3/2}$ from \eqref{eq524}, it follows
that for $h$ small enough $\|\xi\|_{\infty} < 1$ that implies $t_{h}$ was not maximal and that
the argument may be repeated up to $t=T$. It follows that \eqref{eq524} holds on $[0,T]$,
and by \eqref{eq518}, we get that $\|\xi\| + \|\theta\|_{1} = O(h^{r-1})$ on $[0,T]$, proving
\eqref{eq511}. \hfill $\qed$
\end{proof}
The rate of convergence $r-1$ in Proposition \ref{prop51} is optimal for the error $u-u_{h}$ in the
$H^{1}$ norm. In the $L^{2}$ norm we prove in the following proposition, using a duality
argument, a suboptimal convergence rate estimate.
\begin{prop} Under the hypotheses of Proposition $5.1$ there holds
\begin{equation}
\max_{0\leq t\leq T}\|u(t) - u_{h}(t)\|\leq Ch^{r-1/2}\ . 
\label{eq525}
\end{equation}
\end{prop}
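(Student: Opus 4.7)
The plan is to apply an Aubin--Nitsche (duality) argument. Retain the notation of the proof of Proposition~\ref{prop51}: write $u - u_h = \rho + \theta$ with $\rho = u - R_h u$ and $\theta = R_h u - u_h \in S_h$. By \eqref{eq54}, $\|\rho\| \leq Ch^r$, so it suffices to show $\|\theta(t)\| \leq Ch^{r-1/2}$ for each $t \in [0,T]$.

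Fix $t \in [0,T]$ and introduce the elliptic dual $\Phi \in H^2 \cap \mathring{H}^1$ as the solution of $-\Phi'' + \Phi = \theta(t)$ with $\Phi(0) = \Phi(1) = 0$; elliptic regularity gives $\|\Phi\|_2 \leq C\|\theta(t)\|$. Since $\theta(t) \in \mathring{H}^1$, integration by parts yields $\|\theta(t)\|^2 = (\theta(t), -\Phi'' + \Phi) = A(\theta(t), \Phi)$. Because $\theta(t) \in S_h$ and $R_h$ is the $A$-orthogonal projection onto $S_h$, $A(\theta(t), \Phi - R_h\Phi) = 0$, and then \eqref{eq512} produces
\[
\|\theta(t)\|^2 = A(\theta(t), R_h\Phi) = (\xi(t), R_h\Phi).
\]
A direct Cauchy--Schwarz bound of the right-hand side yields only $Ch^{r-1}\|\theta(t)\|$ and reproduces the $H^1$ rate; to do better, I would exploit $\xi(0) = 0$ to represent $(\xi(t), R_h\Phi) = \int_0^t(\xi_s(s), R_h\Phi)\,ds$ and then substitute, for each $s$, the evolution equation for $\xi_s$ derived in the proof of Proposition~\ref{prop51} (cf.\ \eqref{eq516}) with test function $R_h\Phi \in S_h$.

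In the resulting sum of eight terms $\pm w_i(R_h\Phi)$, I would bound each using (a) the orthogonality $\sigma \perp S_h$ combined with integration by parts, which on $\sigma$-type terms converts a factor of $\|\sigma_x\| = O(h^{r-1})$ into $\|\sigma\| = O(h^r)$; (b) the stability $\|R_h\Phi\|_j \leq C\|\Phi\|_j$ for $j = 0, 1$ together with $\|\Phi\|_2 \leq C\|\theta(t)\|$, and $\|R_h\Phi\|_\infty \leq C\|\Phi\|_\infty \leq C\|\theta(t)\|$ from \eqref{eq56}; and (c) the a priori estimates $\|\xi(s)\|, \|\theta(s)\|_1 \leq Ch^{r-1}$ from Proposition~\ref{prop51} together with the inverse-inequality consequence $\|\xi(s)\|_\infty \leq Ch^{r-3/2}$ for the cubic/nonlinear contributions.

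The main obstacle is the semilinear transport-type contribution $w_2(R_h\Phi) = \tfrac{3}{2}(\xi u_x, R_h\Phi)$ (and its companion $w_4$), for which $\xi$ enjoys no orthogonality against $S_h$ and the naive bound $|w_2(R_h\Phi)| \leq C\|\xi\|\|R_h\Phi\| \leq Ch^{r-1}\|\theta(t)\|$ recovers only the $H^1$ rate. Extracting the missing half-power of $h$ requires a more delicate manipulation: writing $R_h\Phi = P_h\Phi + (R_h\Phi - P_h\Phi)$, the latter residual is controlled by its $O(h^2)$ approximation error, while the principal piece $(\xi, u_x P_h\Phi)$ is further decomposed, using \eqref{eq512} in the form $(\xi, P_h[u_x P_h\Phi]) = A(\theta, P_h[u_x P_h\Phi])$ to trade a factor of $\|\xi\|$ for a factor of $\|\theta\|_1$; however, an inverse inequality inserted to control the derivative of the FE factor costs an unavoidable $h^{-1/2}$, which is the origin of the suboptimal exponent. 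Collecting all contributions and integrating in time produces an inequality of the form $\|\theta(t)\|^2 \leq C h^{2r-1} + C\int_0^t \|\theta(s)\|^2\,ds$, from which Gronwall's lemma gives $\|\theta(t)\| \leq Ch^{r-1/2}$; combined with $\|\rho\| \leq Ch^r$ this yields \eqref{eq525}.
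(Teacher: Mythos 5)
Your overall strategy --- an Aubin--Nitsche duality argument based on the elliptic dual $-\Phi''+\Phi=\theta(t)$, $\Phi\in\mathring{H}^{1}$, combined with the superapproximation $\|R_{h}\Phi-\Phi\|\leq Ch^{2}\|\Phi\|_{2}\leq Ch^{2}\|\theta(t)\|$ and the error equations \eqref{eq512}--\eqref{eq513} --- is the same as the paper's, and your opening identities $\|\theta(t)\|^{2}=A(\theta(t),R_{h}\Phi)=(\xi(t),R_{h}\Phi)$ are correct. The paper, however, runs the argument differentially: it writes $\tfrac{1}{2}\tfrac{d}{dt}\|\theta\|^{2}=(\theta_{t},\theta)=A(\theta_{t},R_{h}\rmvs)=(\xi_{t},R_{h}\rmvs)$ with the dual function $\rmvs$ taken at the \emph{current} time, whereas you freeze the dual at the final time $t$ and integrate $(\xi_{s},R_{h}\Phi)$ over $[0,t]$. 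This difference is not cosmetic: the paper's treatment of the one genuinely hard term relies on a same-time quadratic cancellation that your reorganization destroys. Concretely, after moving derivatives onto the smooth dual function, the delicate contribution is $\wt{w}_{2}=(\xi,f)$ with $f=u_{x}\rmvs-u\rmvs_{x}$; the paper writes $(\xi,f)=(\xi,P_{h}f)=A(\theta,P_{h}f-f)+(\theta,f-f_{xx})$, gains the extra power from $\|P_{h}f-f\|_{1}\leq Ch\|f\|_{2}$ together with the computation $f_{xx}=(-m\rmvs+u\theta)_{x}$ (which uses the dual equation $\rmvs_{xx}=\rmvs-\theta$ to keep $\|f\|_{2}\leq C(\|\rmvs\|_{2}+\|\theta\|_{1})\leq Ch^{r-1}$, whence $A(\theta,P_{h}f-f)\leq Ch^{2r-1}$), and absorbs the leftover $(\theta,(u\theta)_{x})$ via the identity $(\theta,u\theta_{x})=-\tfrac{1}{2}(u_{x}\theta,\theta)\leq C\|\theta\|^{2}$. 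Both of these steps require $\theta$ at the same time instant in both slots.

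In your version the corresponding term becomes $(\theta(s),(u\,\theta(t))_{x})$ (or, in your $P_{h}[u_{x}P_{h}\Phi]$ formulation, $(\theta_{x}(s),(P_{h}[u_{x}P_{h}\Phi])_{x})$), and the best available bounds are of the form $\|\theta(s)\|_{1}\|\theta(t)\|\leq Ch^{r-1}\|\theta(t)\|$; after integrating in $s$ and cancelling one factor of $\|\theta(t)\|$ this returns only $\|\theta(t)\|\leq Ch^{r-1}$, i.e.\ no improvement over Proposition \ref{prop51}. Your stated mechanism for recovering the half power (``an inverse inequality \dots costs an unavoidable $h^{-1/2}$'') is not substantiated and does not correspond to a step that closes the estimate: inverse inequalities only lose powers of $h$ here, and the $H^{1}$-stability of $P_{h}$ applied to $u_{x}P_{h}\Phi$ still leaves you with $\|\theta\|_{1}\|\Phi\|_{1}\leq Ch^{r-1}\|\theta(t)\|$. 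The missing ingredients are precisely (i) the superapproximation $\|P_{h}f-f\|_{1}\leq Ch\|f\|_{2}$ with $\|f\|_{2}$ controlled \emph{through the dual equation}, which is where the $h^{2r-1}$ (hence $h^{r-1/2}$) actually originates, and (ii) the same-time skew-symmetry cancellation, which is what forces the differential, rather than time-integrated, form of the duality argument. A further sign that the estimates were not carried through is your claimed final inequality $\|\theta(t)\|^{2}\leq Ch^{2r-1}+C\int_{0}^{t}\|\theta(s)\|^{2}\,ds$: every term on the right-hand side in your setup carries a factor $\|\Phi\|_{j}\leq C\|\theta(t)\|$, so the inequality cannot come out in that form without an additional Young-type absorption.
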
  
\begin{proof}
We use the notation of the proof of Proposition \ref{prop51} If $\rmvs$ is the solution of the problem
\begin{equation}
\begin{aligned}
 \rmvs - \rmvs_{xx} & = 0\ , \\
\rmvs(0,t) = & \rmvs(1,t) = 0\ ,
\end{aligned}
\label{eq526}
\end{equation}
then, taking into account \eqref{eq512} and \eqref{eq513}, it follows that 
\begin{equation}
(\theta_{t},\theta) = A(\theta_{t},\rmvs) = A(\theta_{t},R_{h}\rmvs) = (\xi_{t},R_{h}\rmvs)
= -(e,R_{h}\rmvs - \rmvs) - (e,\rmvs)\ ,
\label{eq527}
\end{equation}
where $e = e_{1} + 2e_{2}$, and 
\begin{align*}
e_{1} & = (m_{x} - m_{hx})u + (u - u_{h})m_{x} - (u -u _{h})(m_{x} - m_{hx})\ , \\
e_{2} & = (m - m_{h})u_{x} + (u_{x} - u_{hx})m - (u_{x} - u_{hx})(m -m_{h})\ .
\end{align*}
(The last inequality of \eqref{eq527} follows from noting that 
$e = e_{1} + 2e_{2} = um_{x} - u_{h}m_{hx} + 2mu_{x} - 2m_{h}u_{hx}$, and \eqref{eq513}
with $\chi = R_{h}\rmvs$.) We now estimate the last two terms in \eqref{eq527}. In view of 
\eqref{eq54} and \eqref{eq526} we have
\begin{equation}
\abs{(e,R_{h}\rmvs - \rmvs)} \leq \|e\| \|R_{h}\rmvs - \rmvs\|\leq Ch^{2}\|\rmvs\|_{2} \|e\|
\leq Ch^{2}\|\theta\| \|e\|\ .
\label{eq528}
\end{equation}
Using \eqref{eq511}, Sobolev's inequality, and e.g. \eqref{eq55} and the inverse inequalities
in $S_{h}$ we see that 
\begin{align*}
\|e_{1}\| & \leq \|u\|_{\infty} \|m_{x} - m_{hx}\| + \|m_{x}\|_{\infty} \|u -u _{h}\|
+ \|u -u_{h}\|_{\infty} \|m_{x} - m_{hx}\|\\
& \leq C(h^{r -2} + h^{r-1} + h^{2r-3}) \leq Ch^{r-2}\ .
\end{align*} 
Similarly $\|e_{2}\| \leq Ch^{r-2}$, and \eqref{eq528} gives
\begin{equation}
\abs{(e,R_{h}\rmvs - \rmvs)} \leq Ch^{r}\|\theta\|\ .
\label{eq529}
\end{equation}
The definition of $e$ gives 
\begin{align*}
(e,\rmvs) & = (\sigma_{x}u + 2\sigma u_{x},\rmvs) + (\rho m_{x} + 2\rho_{x}m,\rmvs)
+ (\theta m_{x} + 2\theta_{x}m,\rmvs) - (\rho\sigma_{x} + 2\rho_{x}\sigma,\rmvs) \\
& \,\,\,\,\,\,\,\, -(\rho\xi_{x} + 2\rho_{x}\xi,\rmvs) - (\theta\sigma_{x} 
+ 2\theta_{x}\sigma,\rmvs)
- (\theta\xi_{x} + 2\theta_{x}\xi,\rmvs) + (\xi_{x}u+2\xi u_{x},\rmvs)\ .
\end{align*}
Hence, integrating by parts,
\begin{equation}
\begin{aligned}
(e,\rmvs) & = (\sigma,u_{x}\rmvs - u\rmvs_{x}) - (\rho,m_{x}\rmvs + 2m\rmvs_{x}) 
- (\theta,m_{x}\rmvs + 2m\rmvs_{x})  - (\rho\sigma_{x} + 2\rho_{x}\sigma,\rmvs)\\
&\,\,\,\,\,\,\,\, + (\xi,\rho \rmvs_{x} - \rho_{x}\rmvs) 
+ (\sigma,\theta \rmvs_{x} - \theta_{x}\rmvs)
+(\xi,\theta \rmvs_{x} - \theta_{x}\rmvs) + (\xi,u_{x}\rmvs - u\rmvs_{x}) \\
& = : \wt{w}_{1} + \wt{w}_{2}\ ,
\end{aligned}
\label{eq530}
\end{equation}
where 
\begin{align*}
\wt{w}_{1} & = (\sigma,u_{x}\rmvs - u\rmvs_{x}) - (\rho,m_{x}\rmvs + 2m\rmvs_{x}) 
- (\theta,m_{x}\rmvs + 2m\rmvs_{x}) - (\rho\sigma_{x} + 2\rho_{x}\sigma,\rmvs)\\
&\,\,\,\,\,\,\,\, + (\xi,\rho \rmvs_{x} - \rho_{x}\rmvs) 
+ (\sigma,\theta \rmvs_{x} - \theta_{x}\rmvs) +(\xi,\theta \rmvs_{x} - \theta_{x}\rmvs)\ ,
\end{align*}
and 
\[
\wt{w}_{2} = (\xi,u_{x}\rmvs - u\rmvs_{x})\ .
\]
To estimate $\wt{w}_{1}$ note that
\begin{align*}
\abs{\wt{w}_{1}} & \leq \|\sigma\|(\|u_{x}\|_{\infty}\|\rmvs\| + \|u\|_{\infty}\|\rmvs_{x}\|)
+ \|\rho\|(\|m_{x}\|_{\infty} \|\rmvs\| + 2\|m\|_{\infty}\|\rmvs_{x}\|) \\
&\,\,\,\,\,\,\,\, + \|\theta\|(\|m_{x}\|_{\infty}\|\rmvs\| + 2\|m\|_{\infty}\|\rmvs_{x}\|)
+ (\|\rho\|\|\sigma_{x}\| + 2\|\rho_{x}\|\|\sigma\|)\|\rmvs\|_{\infty} \\
&\,\,\,\,\,\,\,\, + \|\xi\|(\|\rho\|\|\rmvs_{x}\|_{\infty} + \|\rho_{x}\|\|\rmvs\|_{\infty})
+ \|\sigma\|(\|\theta\| \|\rmvs_{x}\|_{\infty} + \|\theta_{x}\|\|\rmvs\|_{\infty}) \\
&\,\,\,\,\,\,\,\, + \|\xi\|(\|\theta\|\|\rmvs_{x}\|_{\infty} + \|\theta_{x}\|\|\rmvs\|_{\infty})\ .
\end{align*} 
Therefore, taking into account \eqref{eq526}, \eqref{eq55}, \eqref{eq54}, Sobolev's inequality,
\eqref{eq511}, and the inverse inequalities we see that
\begin{equation}
\abs{\wt{w}_{1}} \leq C(h^{2r} + \|\theta\|^{2})\ .
\label{eq531}
\end{equation}
In order to estimate $\wt{w}_{2}$, letting $f = u_{x}\rmvs - u\rmvs_{x}$ and recalling 
\eqref{eq512} we have 
\[
\wt{w}_{2} =(\xi,u_{x}\rmvs-u\rmvs_{x}) = (\xi,P_{h}f)=A(\theta,P_{h}f - f) + (\theta,f - f_{xx})\ .
\]
Since $f_{xx} = (u_{xx}\rmvs - u\rmvs_{xx})_{x} = (-m\rmvs + u\theta)_{x}$, 
integration by parts gives
\[
\wt{w}_{2} = A(\theta,P_{h}f - f) + (\theta,f) + (\theta,(m\rmvs)_{x}) 
+ \tfrac{1}{2}(u_{x}\theta,\theta)\ .
\]
Hence,
\[
\abs{\wt{w}_{2}} \leq \|\theta\|_{1} \|P_{h}f - f\|_{1} + \|\theta\|\|f\|
+ \|\theta\|(\|m_{x}\|_{\infty} \|\rmvs\| + \|m\|_{\infty}\|\rmvs_{x}\|) + C\|\theta\|^{2}\ .
\]
Noting by \eqref{eq55} that $\|P_{h}f - f\|_{1} \leq Ch\|f\|_{2}$ and taking into account
\eqref{eq511} and \eqref{eq526} we see that 
\[
\abs{\wt{w}_{2}} \leq C(h^{2r-1} + \|\theta\|^{2})\ .
\]
Therefore, using this and \eqref{eq531}, \eqref{eq527}, and \eqref{eq529} give
\[
\tfrac{d}{dt}\|\theta\|^{2} \leq C(h^{2r-1} + \|\theta\|^{2})\ .
\]
Gronwall's lemma yields now that $\|\theta\|\leq Ch^{r-1/2}$, proving \eqref{eq525}.  
\hfill $\qed$                                             
\end{proof}
\textbf{Remark} It is also possible to construct a standard Galerkin method directly for the
ibvp \eqref{eqchb}. For this purpose, using the notation 
introduced in the beginning of this section we will, as an example, discretize the problem in the
space of cubic splines. i.e. in 
$S_{h}=\{\phi \in S_{h}^{4,2} : \phi(0)=\phi''(0) = \phi(1)=\phi''(1)=0\}$. We may assume now,
cf. \cite{df}, \cite{s}, \cite{adm}, that
\[
\|R_{h}\rmvs - \rmvs\|_{j} \leq Ch^{4-j}\|\rmvs\|_{4}, \quad j=0,1,2\ ,
\]
for $\rmvs \in H^{4}(0,1)$ satisfying $\rmvs(0)=\rmvs''(0)=\rmvs(1)=\rmvs''(1)=0$, and that
$\|\chi\|_{k}\leq Ch^{-1}\|\chi\|_{k-1}$, $k=1,2$, $\chi \in S_{h}$. The standard Galerkin 
semidiscrete approximation of \eqref{eqchb} in $S_{h}$ is then the map 
$u_{h} : [0,T] \to S_{h}$ defined by
\begin{equation}
A(u_{ht}, \phi) = (u_{hx}u_{hxx},\phi) - 3(u_{h}u_{hx},\phi) - (u_{h}u_{hxx},\phi'), 
\,\,\,\, t\in [0,T],\,\,\,\, \phi \in S_{h}\ ,
\label{eq532}
\end{equation}
with
\begin{equation}
u_{h}(0) = R_{h}u_{0}\ .
\label{eq533}
\end{equation}
Using standard techniques of error estimation we were able to prove only suboptimal error 
estimates for $u_{h}$. Specifically it holds that if $u$ is a classical solution of
\eqref{eqchb}, then $u_{h}$ exists for $t\in [0,T]$ and satisfies 
\begin{align*}
\max_{0\leq t\leq T}\|u -u_{h}\|_{1} & \leq Ch^{2} \\
\max_{0\leq t\leq T}\|u -u_{h}\| & \leq Ch^{2.5}\ .
\end{align*}

\section{Numerical experiments}\label{sec6}

In this section we present the results of some numerical experiments that we performed to approximate solutions of the Camassa-Holm equation, in its reduced form RCH (i.e. with $k=0$ in \eqref{eq11}). using the Galerkin finite element methods analyzed in the previous sections to discretize the equation in the spatial variable. In various places in the sequel use will be made of formulas of exact smooth travelling-wave solutions of RCH in the notation of \cite{paii} (see specifically formulas (2.29)--(2.30) in that reference): Let $\kappa>0$, $p>0$, be two parameters such that $0<\kappa p<1$, and $\tilde{c}=2\kappa^2/(1-\kappa^2p^2)$, $V=\tilde{c}+\kappa^2$. Then for $x_0\in \mathbb{R}$ the formulas
\begin{equation}\label{eq61}
u(\xi)=\kappa^2+\frac{\tilde{c}^2p^2\sech^2 \frac{1}{2}\theta}{2+\tilde{c}p^2\sech^2 \frac{1}{2}\theta}\ ,
\end{equation}
with
\begin{equation}\label{eq62}
\xi=x-Vt+x_0=\frac{\theta}{\kappa p}+\ln \left[\frac{(1+\kappa p)+(1-\kappa p)e^\theta}{(1-\kappa p)+(1+\kappa p)e^\theta} \right]\ ,
\end{equation}
define parametrically a family of single-pulse travelling-wave solutions of (RCH) that are centered at $x=x_0$, travel to the right with speed $V$ and decay exponentially as $|x|\rightarrow\infty$ to $\kappa^2$. (As a consequence of the change-of-variable formula that transforms \eqref{eq11} into the RCH \eqref{eq14}, it is seen that \eqref{eq11} with $k=\kappa^2$ possesses smooth solitary-wave solutions decaying to zero as $|x|\rightarrow\infty$, and found by replacing $u$ in \eqref{eq61} by $u+\kappa^2$ and putting $V=\tilde{c}$.) As $\kappa\rightarrow 0$ with $\kappa p\rightarrow 1$, the solution \eqref{eq61}--\eqref{eq62} of the RCH tends to the one-parameter family of peakon (soliton) solutions of the RCH given explicitly by
\begin{equation}\label{eq63}
u(x,t)=\tilde{c} \exp\left(-|x-\tilde{c}t+x_0|\right)\ .
\end{equation}
(The corresponding non-smooth travelling wave solutions of CH with $k\not=0$ are given by
$u=\tilde{u}-k$, where $\tilde{u}=\tilde{c} \exp\left(-|x-(\tilde{c}-k)t+x_0|\right)$.
In the numerical experiments we compute numerically the solution \eqref{eq61}--\eqref{eq62} for each $(x,t)$, given the parameters $\kappa$, $p$, by solving \eqref{eq62} for $\theta$ by Newton's method with absolute error tolerance $10^{-10}$ and substituting in \eqref{eq61}.

\subsection{Fully discrete schemes and stability conditions}\label{sec61}

We used the classical, explicit, four-stage, fourth-order accurate Runge-Kutta scheme with a uniform time step (RK4) for the temporal discretization of the o.d.e. systems representing the various space-discrete finite element schemes of the previous sections. (Unless otherwise mentioned, we solved numerically the periodic initial-value problem for RCH.) The integrals in the finite element equations were approximated by Gauss-Legendre quadrature with five nodes in each mesh interval for cubic and quadratic splines and with three nodes for piecewise linear discretizations. The initial conditions of the fully discrete scheme were normally taken as the `elliptic' ($H^1$) projections of the initial data; taking the $L^2$ projections gave practically the same results.

Due to the presence of the $-u_{xxt}$ term in the left-hand side of RCH the o.d.e. semidiscrete systems are only mildly stiff and the fully discrete schemes were stable under a Courant number restriction. To get some indication of the maximum allowed Courant number we approximated two smooth travelling wave solutions given by \eqref{eq61}--\eqref{eq62} corresponding to $\kappa=1$ and $V=4$, and $V=6$, on $[-100,100]$ with $h=0.1$, and tested the stability of the code integrating up to $T=100$. We also studied the stability of the code for peakons given by \eqref{eq63} for $V=\tilde{c}=1$, $2$, $3$ on $[-100,100]$ with $h=0.05$. The maximum allowed values of the Courant number $V\Delta t/h$ for stability are recorded in Table \ref{T1}. They depend on $V$ in the case of smooth travelling waves but are independent of $V$ for peakons.

\begin{table}[ht!]
\begin{tabular}{c||cc|ccc}
\hline
\multicolumn{1}{c}{} & \multicolumn{2}{c}{Standard Galerkin} &  \multicolumn{3}{c}{Modified Galerkin}\\
\hline
 & cubic & quadratic & cubic & quadratic & linear  \\ \hline
Travelling wave $V=4$ &  $2.92$ & $3.91$ & $2.62$ & $2.93$ & $3.93$ \\
Travelling wave $V=6$ &  $2.18$ & $2.68$ & $1.98$ & $2.18$ & $1.79$ \\
peakons & $1.54$ & $1.83$ & $1.41$ & $1.54$ & $1.83$ \\
\hline
\end{tabular}
\caption{Maximum Courant number $V\Delta t/h$ for stability for various spatial discretizations}
\label{T1}
\end{table}

\subsection{Spatial convergence rates}\label{sec62}

We performed a series of numerical experiments with our fully discrete schemes in order to compute the numerical rates of convergence of the spatial discretizations in various norms. (In all cases we checked that the errors practically did not change when we computed with smaller values of $\Delta t$. Usually we took $\Delta t/h=1/10$.)

In the case of smooth solutions of the periodic ivp the numerical experiments confirm the theoretical $O(h^r)$ $L^2$-error convergence rates. (We experimented, on uniform meshes, with cubic and quadratic splines for the standard Galerkin method, and with cubic, quadratic, and linear splines for the modified scheme). The numerical values of the convergence rates in other norms were practically equal those expected from the approximation properties of the finite element spaces, namely $r-1$ for the $H^1$ norm in all cases, $r$ for the $L^\infty$ norm in all cases, and $r-2$ in the case of the $H^2$ norm for cubic and quadratic splines. It is noteworthy that for the same test problem the values of the two spatial discretizations (standard and modified) were very close in all norms.

In order to find the numerical convergence rates in the case of nonsmooth solutions we computed the evolution of the peakon of unit speed $u(x,t)=\exp\left(-|x-t|\right)$ on the interval $[-40,40]$ with periodic boundary conditions, up to $T=1$, using uniform meshes in space with $h=80/N$ and $\Delta t/h=1/10$, $\Delta t=1/M$. We used cubic, quadratic, and linear splines. Table \ref{T2} shows the observed errors (normalized by the corresponding norm of the exact solution at $T=1$) and convergence rates in the $L^2$ and $H^1$ norms in the case of cubic splines at $T=1$. 

\begin{table}[ht!]
\begin{tabular}{ c c || c c c c | c c c c }
 \hline
 \multicolumn{2}{c}{} &  \multicolumn{4}{c}{Standard Galerkin} &  \multicolumn{4}{c}{Modified Galerkin} \\
 \hline
$N$ & $M$ & $L^2$ error & rate & $H^1$ error & rate & $L^2$ error & rate & $H^1$ error & rate \\
 \hline
$160$ & $20$  & $1.1109\times 10^{-1}$ &  --  & $4.1633\times 10^{-1}$ &  -- &        $1.0346\times 10^{-1}$ &  --  & $4.0152\times 10^{-1}$ &  --   \\
$320$ &  $40$ & $5.1323\times 10^{-2}$ & $1.114$  & $3.1138\times 10^{-1}$ & $0.419$ & $4.6734\times 10^{-2}$ & $1.147$ & $2.9610\times 10^{-1}$ & $0.439$ \\
$640$ &  $80$ & $2.3124\times 10^{-2}$ & $1.150$  & $2.3106\times 10^{-1}$ & $0.430$ & $2.0617\times 10^{-2}$ & $1.181$ & $2.1716\times 10^{-1}$ & $0.447$ \\
$1280$ & $160$ & $1.0417\times 10^{-2}$ & $1.150$  & $1.7091\times 10^{-1}$ & $0.435$ & $9.1382\times 10^{-3}$ & $1.174$ & $1.5881\times 10^{-1}$ & $0.451$ \\
$2560$ & $320$ & $4.7544\times 10^{-3}$ & $1.132$  & $1.2626\times 10^{-1}$ & $0.437$ & $4.1283\times 10^{-3}$ & $1.146$ & $1.1600\times 10^{-1}$ & $0.453$ \\
$5120$ & $640$ & $2.2090\times 10^{-3}$ & $1.106$  & $9.3242\times 10^{-2}$ & $0.437$ & $1.9097\times 10^{-3}$ & $1.112$ & $8.4706\times 10^{-2}$ & $0.454$ \\
 \hline
\end{tabular}
\caption{$L^2$ and $H^1$ normalized errors and convergence rates at $T=1$, peakon solution, cubic splines, $h=80/N$, $\Delta t=1/M$}
\label{T2}
\end{table}

The experimental rates of the $L^2$ errors are equal to about $1.1$ for both Galerkin methods and those of the $H^1$ errors between $0.4$ and $0.5$. The errors of the modified method are slightly smaller. In Table \ref{T3} we compare the normalized $L^2$, $L^\infty$, and $H^1$ errors and convergence rates of the cubic ($r=4$), quadratics ($r=3$), and linear ($r=2$) spline discretizations for the same test problem. We show only the data for the line corresponding to $N=5120$, $M=640$). As expected from the low regularity of the solution, the rates of convergence for all spline discretizations were roughly the same for a given norm. It appears that the modified scheme with cubic splines yields the smallest errors. 

\begin{table}[ht!]
\begin{tabular}{ l || l l | l l l }
 \hline
 \multicolumn{1}{c}{} &  \multicolumn{2}{c}{Standard Galerkin} &  \multicolumn{3}{c}{Modified Galerkin} \\
 \hline
Norm & $r=4$ & $r=3$ & $r=4$ & $r=3$ & $r=2$  \\
 \hline
$L^2$ error & $2.2090\times 10^{-3}$ & $3.3557\times 10^{-3}$ & $1.9097\times 10^{-3}$ & $2.6936\times 10^{-3}$ & $3.3828\times 10^{-3}$ \\
rate & $1.106$ & $1.064$ & $1.112$ & $1.060$ & $1.125$ \\
$L^\infty$ error & $7.2834\times 10^{-3}$ & $1.1634\times 10^{-2}$ & $6.5729\times 10^{-3}$ & $7.9459\times 10^{-3}$ & $1.3519\times 10^{-2}$\\
rate & $0.902$ & $0.798$ & $0.941$ & $0.848$ & $0.814$\\
$H^1$ error & $9.3241\times 10^{-2}$ & $1.0899\times 10^{-1}$ & $8.4706\times 10^{-2}$ & $9.0104\times 10^{-2}$ & $1.1564\times 10^{-1}$ \\
rate & $0.437$ & $0.403$ & $0.454$ & $0.443$ & $0.407$\\
 \hline
\end{tabular}
\caption{$L^2$, $L^\infty$, and $H^1$ normalized errors and convergence rates at $T=1$, peakon solutions, splines of order $r$, $N=80/h=5120$, $M=1/\Delta t=640$}
\label{T3}
\end{table}

We also discretized the ibvp (C-H-s-b) for the RCH using the spatial discretization \eqref{eq58}--\eqref{eq510} coupled with RK4 in order to check the theoretically predicted convergence rates of the error bounds in \eqref{eq511} and \eqref{eq525}. The spatial discretization was effected using piecewise linear continuous functions ($r=2$) and cubic splines ($r=4$) on uniform and nonuniform meshes in several test problems. In the case of uniform mesh the $L^2$, $L^\infty$, and $H^1$ numerical convergence rates were found to be approximately equal to $r$, $r$, and $r-1$, respectively, for the approximations of both $m$ and $u$ in \eqref{eq511} and \eqref{eq525}. Thus,the predicted rates of the $L^2$ errors for m and u in \eqref{eq511} and \eqref{eq525} appear to be pessimistic for uniform meshes. In the case of a quasiuniform mesh (we experimented with the mesh $h/2,3h/2,h/2,3h/2,\cdots, h=1/N$), some results are shown in Table \ref{T4} for a test example with appropriate right-hand side and exact solution $u(x,t)=e^t[x\sin\pi x-\frac{\pi}{6}(x-1/2)+\frac{2\pi}{3}(x-1/3)^3]$, $m=u-u_{xx}$. In the case of piecewise linear functions the table shows that the apparent $L^2$-convergence rates are equal to $1$ for the approximation of $m$ and $2$ for that of $u$. (The $L^\infty$ rates, not shown in the table, were approximately equal to $1$ for $m$ and $2$ for $u$, while the $H^1$ rate for $u$ was equal to 1.) For cubic splines the observed $L^2$ rates were approximately equal to $3$ and $4$ for $m$ and $u$, respectively. (The $L^\infty$ and $H^1$ rates for $m$ were equal to approximately $2.8$ and $2$, respectively, whereas the corresponding rates for $u$ came out to be $4$ and $3$, respectively.)
\begin{table}[ht!]
\begin{tabular}{cc}
(a) $r=2$ & (b) $r=4$ \\
\begin{tabular}{c||cc|cc}
\hline
 \multicolumn{1}{c}{} &  \multicolumn{2}{c}{$m$} &  \multicolumn{2}{c}{$u$} \\
\hline
$N$ & $L^2$ error & rate & $L^2$ error & rate \\ \hline
$32$ & $7.2147\times 10^{-1}$ & -- & $1.2979\times 10^{-3}$ & --\\
$64$ & $3.9516\times 10^{-1}$ & $0.866$ & $3.2705\times 10^{-4}$ & $1.989$ \\
$128$ & $2.1177\times 10^{-1}$ & $0.900$ & $8.1585\times 10^{-5}$ & $2.003$ \\
$256$ & $1.0508\times 10^{-1}$ & $1.011$ & $2.0389\times 10^{-5}$ & $2.000$ \\
$512$ & $5.2300\times 10^{-2}$ & $1.007$ & $5.0977\times 10^{-6}$ & $2.000$ \\
$1024$ & $2.6110\times 10^{-2}$ & $1.002$ & $1.2745\times 10^{-6}$ & $2.000$ \\
$2048$ & $1.3048\times 10^{-2}$ & $1.001$ & $3.1862\times 10^{-7}$ & $2.000$ \\
\hline
\end{tabular}
&
\begin{tabular}{c||cc|cc}
\hline
 \multicolumn{1}{c}{} &  \multicolumn{2}{c}{$m$} &  \multicolumn{2}{c}{$u$} \\
\hline
$N$ & $L^2$ error & rate & $L^2$ error & rate \\ \hline
$8$ & $3.9055\times 10^{-2}$ & -- & $3.8715\times 10^{-4}$ & --\\
$16$ & $6.0719\times 10^{-3}$ & $2.685$ & $3.4546\times 10^{-5}$ & $3,486$ \\
$32$ & $8.2945\times 10^{-4}$ & $2.872$ & $2.5099\times 10^{-6}$ & $3.783$ \\
$64$ & $1.0748\times 10^{-4}$ & $2.948$ & $1.6634\times 10^{-7}$ & $3.915$ \\
$128$ & $1.3568\times 10^{-5}$ & $2.986$ & $1.0654\times 10^{-8}$ & $3.965$ \\
$256$ & $1.7037\times 10^{-6}$ & $2.993$ & $6.7326\times 10^{-10}$ & $3.984$ \\
$512$ & $2.1325\times 10^{-7}$ & $2.998$ & $4.2300\times 10^{-11}$ & $3.992$ \\
$1024$ & $2.6665\times 10^{-8}$ & $3.000$ & $2.7035\times 10^{-12}$ & $3.968$ \\
\hline
\end{tabular}
\end{tabular}
\caption{$L^2$ errors and rates of convergence for $m$ and $u$ for the ibvp (CH-s-b), quasiuniform mesh, (a) linear splines, (b) cubic splines}
\label{T4}
\end{table}
Thus, it appears that for our quasiuniform mesh examples the rates in \eqref{eq511} are sharp, while the rate $r-1/2$ in \eqref{eq525} is pessimistic by $1/2$. (The reduction of order for the Galerkin approximation of $m$ is not surprising since $m$ is the solution of a first-order hyperbolic p.d.e.; the $L^2$ rate of the approximation of $u$ is expected to be optimal as $u_h$ is the Galerkin approximation of a two-point b.v.p. whose right-hand side $m_h$ is $C^0$ if $r=2$ and $C^2$ if $r=4$.)

\subsection{Conservation of invariants}\label{sec63}

It is well known, cf. \cite{ch}, and easy to check, that solutions of the Cauchy problem of the CH equation \eqref{eq11} and also of the periodic ivp for \eqref{eq11} on an interval $(a,b)$ satisfy the conservation laws $\frac{d}{dt}H_i[u]=0$, $i=0,1,2$, where
\begin{equation}\label{eq64}
H_0[u]=\int_a^bu~dx,\quad H_1[u]=\int_a^b(u^2+u_x^2)~dx,\quad H_2[u]=\int_a^bu(u^2+u_x^2+2ku)~dx\ .
\end{equation}
In the case of the periodic ivp for the CH in the system form \eqref{eq13} it holds that $\frac{d}{dt}\tilde{H}_i[m,u]=0$, $i=0,1,2$, where
\begin{equation}\label{eq65}
\tilde{H}_0[m,u]=\int_a^b m~dx,\quad \tilde{H}_1[m,u]=\int_a^b mu~dx,\quad \tilde{H}_2[m,u]=\int_a^b(u^2m-uu_x^2+2ku^2)~dx\ .
\end{equation}
In order to assess the quality of the numerical approximations, it is important to check the extent to which the fully discrete schemes under study preserve these invariants. (As usual, we consider the RCH equation). The standard Galerkin semidiscretization \eqref{eq31} clearly preserves $H_0$ and $H_1$ in the sense that $H_i[u_h(t)]=H_i[u_h(0)]$, $i=0,1$. The invariant $H_0$ is also trivially preserved by the full discretization of \eqref{eq31} by the RK4 scheme, in contrast to $H_1$ which is not preserved. The modified Galerkin method \eqref{eq41}--\eqref{eq42} for the periodic ivp preserves $\tilde{H}_1$: Let $m_h$, $u_h$ be the solution of \eqref{eq41}--\eqref{eq44}. Then, using \eqref{eq42} with $\phi=u_h$, and \eqref{eq41} with $\phi={u_h}_t$ we have
\begin{align*}
\frac{d}{dt}\tilde{H}_1[m_h,u_h] &= ({m_h}_t,u_h)+(m_h,{u_h}_t)\\
&= -\left((m_hu_h)_x,u_h \right)-(m_h{u_h}_x,u_h)+(u_h,{u_h}_t)+({u_h}_x,{u_h}_{xt})\\
&= \frac{1}{2}\frac{d}{dt}\|u_h\|_1^2\ .
\end{align*}
On the other hand \eqref{eq41} with $\phi=u_h$ gives $(m_h,u_h)=\|u_h\|_1^2$, i.e. $\frac{d}{dt}\tilde{H}_1[m_h,u_h]=\frac{d}{dt}\|u_h\|_1^2$. From these identities we conclude that $\frac{d}{dt}\tilde{H}_1[m_h,u_h]=\frac{d}{dt}\|u_h\|_1^2=0$, as claimed. Note that the temporal discretization of \eqref{eq41}--\eqref{eq44} with RK4 does not conserve $\tilde{H}_1$.

In what follows we present some graphs of the temporal evolution of the invariants using the fully discrete schemes. We computed the quantities 
\begin{equation*}
E_i(t)=\log_{10}\left|\frac{H_i(t)-H_i(0)}{H_i(t)}\right|\ ,
\end{equation*}
(and the analogous $\tilde{H}_i$ in the case of the modified method) for $t\geq \Delta t$. Here, $H_i(t)=H_i\left(u_h(t)\right)$ and $\tilde{H}_i(t)=\tilde{H}_i\left[m_h(t),u_h(t)\right]$.
In the case of the {\em smooth solutions} we solved numerically the periodic ivp for the RCH on the spatial interval $[-50,50]$ up to $t=100$ with the standard and modified method using as initial condition $u_0(x)=1+e^{-x^2}$.
\begin{figure}[ht!]
  \centering
  \includegraphics[width=0.7\columnwidth]{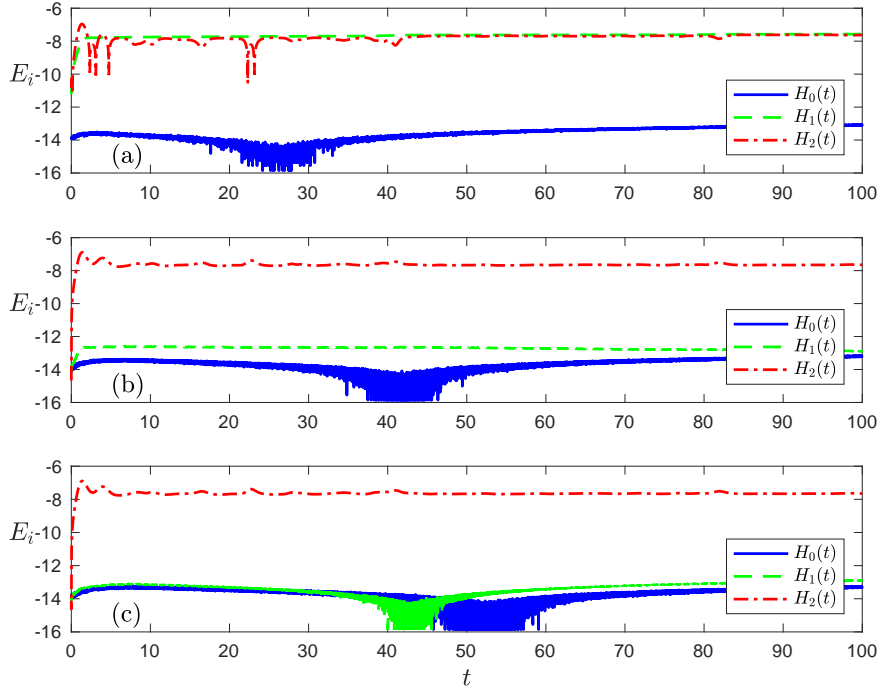}
  \caption{Preservation of invariants $H_0$, $H_1$, $H_2$, smooth solutions. Standard Galerkin method with cubic splines, $h=0.1$, (a) $\frac{\Delta t}{h}=\frac{1}{10}$, (b) $\frac{\Delta t}{h}=\frac{1}{100}$, (c) $\frac{\Delta t}{h}=\frac{1}{200}$}
  \label{fig1}
\end{figure}

\begin{figure}[ht!]
  \centering
  \includegraphics[width=0.7\columnwidth]{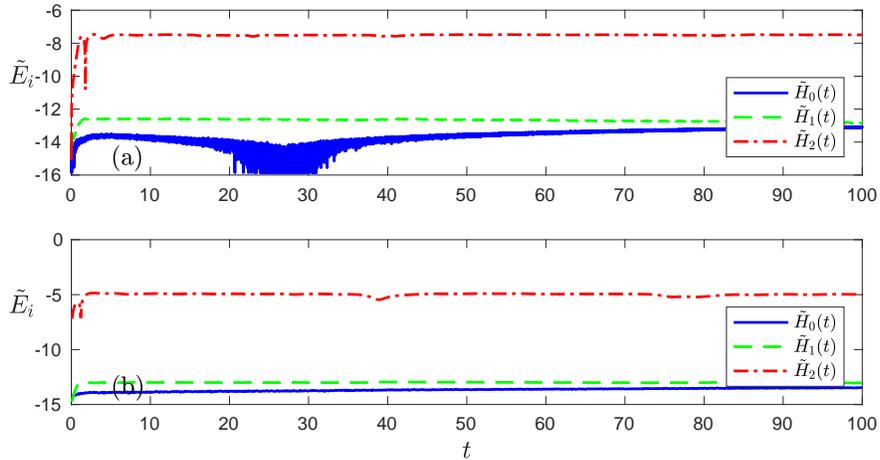}
  \caption{Preservation of invariants $\tilde{H}_0$, $\tilde{H}_1$, $\tilde{H}_2$, smooth solutions. modified Galerkin method, $h=0.1$, $\Delta t=10^{-3}$ (a) cubic splines, (b) p.w. linear continuous functions}
  \label{fig2}
\end{figure}

\begin{figure}[ht!]
  \centering
  \includegraphics[width=0.7\columnwidth]{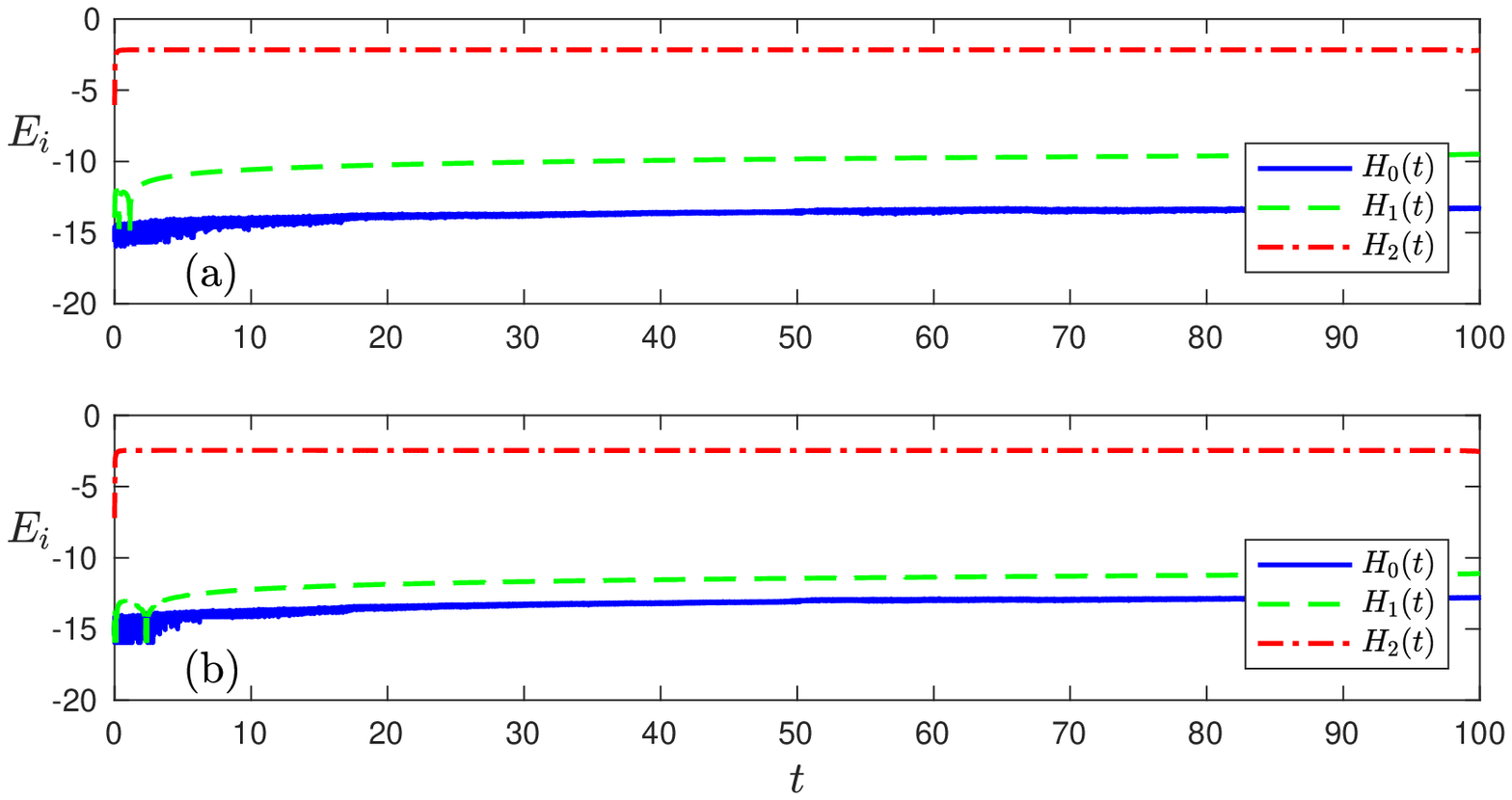}
  \caption{Preservation of invariants $H_0$, $H_1$, $H_2$, peakon, $A=1$. Standard Galerkin method with cubic splines, (a) $h=0.1$, $\Delta t=10^{-3}$, (b) $h=0.05$, $\Delta t=2.5\times 10^{-4}$}
  \label{fig3}
\end{figure}

Figure \ref{fig1} shows the graphs of the logarithmic relative errors $E_i$ of $H_i$ as functions of $t$ for this example, produced for $h=0.1$ and diminishing $\Delta t/h$ by the standard Galerkin method with cubic splines coupled with RK4 time stepping. $H_0$ is preserved to roundoff as expected, while $H_2$ is preserved to about $8$ decimal digits for this value of $h$. (The error in $H_2$ is practically due to the spatial approximation since it remains almost constant as $\Delta t$ is decreased.) The error in $H_1$ also diminishes with $\Delta t$; for $\frac{\Delta t}{h}=\frac{1}{200}$ its temporal discretization part is practically negligible and $H_1$ is preserved to roundoff as expected.

In the case of the modified Galerkin fully discrete scheme the cubic spline results for the preservation of the $H_i=H_i[u_h]$ are practically the same with those shown in Figure 1. In Figure \ref{fig2} we show the evolution of the logarithmic errors $\tilde{E}_i$ of the invariants $\tilde{H}_i[m_h,u_h]$ pertinent to the system form of RCH for the fully discrete schemes with cubic splines and piecewise linear functions (the latter are admitted by the modified scheme), for the same example, with $h=0.1$ and $\Delta t=10^{-3}$. For cubic splines $\tilde{H}_2$ is preserved to at least $7$ decimal digits and to about $5$ digits for the less accurate discretization with piecewise linear functions. For both discretizations the relative error of $\tilde{H}_1$ is of $O(10^{-13})$ and is due to the temporal discretization as $\tilde{H}_1$ is preserved for both semidiscrete schemes. Apparently $\tilde{H}_0$ seems to be preserved to almost roundoff in both cases.
In the case of {\em peakons} we computed the logarithmic errors of the invariants as functions of $t$ in several test runs. In Figure \ref{fig3}, as an example, we show the results in the case of a peakon of unit amplitude and speed approximated by the standard Galerkin method with cubic splines for our usual (a), and a finer mesh (b). We observe that $H_0$ is preserved to roundoff, that the error of $H_1$ is small and is reduced by at most two digits in (b), while that of $H_2$ practically remains the same. This is due of course to the lack of smoothness of the solution that reduces the rate of convergence of the schemes, cf. Section \ref{sec62}.

\subsection{Error indicators for travelling waves}\label{sec64}

In order to further assess the accuracy of our numerical methods we studied several error indicators pertinent to travelling-wave solutions, namely the {\em amplitude}, {\em phase}, {\em speed}, and {\em shape} errors, \cite{bps}, \cite{bdkmc}, for smooth travelling waves and peakons. Since the amplitude of the travelling wave remains constant, we define the (normalized) {\em amplitude error} at $t=t^n=n\ \Delta t$ as 
\begin{equation}\label{eq66}
E_{amp}(t^n)=\frac{|U(x^\ast(t^n),t^n)-U_0|}{|U_0|}\ ,
\end{equation}
where $U(x,t^n)$ is the fully discrete numerical approximation at $t^n$, $U_0$ is the initial amplitude (peak value) of the exact travelling wave and $x^\ast(t^n)$ is the point where $U(\cdot,t^n)$ achieves its maximum. The value of $x^\ast(t^n)$ is found by solving the equation $\frac{d}{dx} U(x,t^n)=0$ by Newton's method (with tolerance $10^{-10}$) for smooth solutions and by the bisection method for peakons. (For piecewise linear functions we took as $x^\ast$ the mesh point where $max_i U(x_i,t^n)$ occurs.) As initial value for the Newton iteration we took the quadrature node at which $U$ has a discrete maximum. We started the bisection method by taking an interval of length $4h$ around that quadrature node.

Finding $x^\ast(t^n)$ enables us to compute a {\em phase error} of the numerical solution $U$ at $t^n$ as
\begin{equation}\label{eq67}
E_{phase}(t^n)=|x^\ast(t^n)-Vt^n|\ ,
\end{equation}
for a travelling wave of speed $V$, initially centered at $x=0$, and a {\em speed error} defined as
\begin{equation}\label{eq68}
E_{speed}(t^n)=|V-V_h(t^n,\tau)|\ ,
\end{equation}
where $V_h(t^n,\tau)=\left(x^\ast(t^n)-x^\ast(t^n-\tau)\right)/\tau$. We usually take $\tau$ much larger than $\Delta t$ to smooth out oscillations in the discrete speed $V_h(t^n,\tau)$. (In the computations to be reported in the sequel on the temporal interval $[0,100]$, we normally take $\tau=1$.)

Finally, the (normalized) {\em shape error} measures the amount by which the numerical solution at $t^n$ differs from the exact travelling wave profile $u(\cdot,t^n)$ traslated so as to give the best fit. It is defined with respect to the $L^2$ norm as
\begin{equation}\label{eq69}
E_{shape}(t^n)=\min_s\zeta(s), \quad \zeta(s):=\frac{\|U(\cdot,t^n)-u(\cdot,s)\|}{\|u(\cdot,0)\|}\ ,
\end{equation}
where the minimum of $\zeta(s)$ is found again using the Newton or the bisection method to solve $\frac{d}{ds}\zeta^2(s)=0$ in the vicinity of $t^n$.

In Table \ref{T5} we show the values of the amplitude, phase and shape error at $t^n=100$ of the fully discrete approximations produced by the standard and the modified Galerkin method in the case of a smooth travelling wave of the form \eqref{eq61}--\eqref{eq62} with $\kappa=1$ and $V=4.333$. We took the spatial interval $[-100,100]$ and computed with $h=0.1$, $0.05$, $0.025$ and $\Delta t=h/10$.

\begin{table}[ht!]
\begin{tabular}{ l || l l l  | l l l  }
 \multicolumn{7}{l}{(a) Standard Galerkin} \\
 \hline
 \multicolumn{1}{c}{} &  \multicolumn{3}{c}{$r=4$, cubic splines} &  \multicolumn{3}{c}{$r=3$, quadratic splines} \\
 \hline
$h$ & $E_{amp}$ & $E_{phase}$ & $E_{shape}$ & $E_{amp}$ & $E_{phase}$ & $E_{shape}$  \\
 \hline
$0.1$ & $9.1617\times 10^{-9}$ & $7.0771\times 10^{-6}$ & $1.2058\times 10^{-8}$ & $5.4368\times 10^{-7}$ & $2.6859\times 10^{-5}$ & $1.0699\times 10^{-7}$ \\
$0.05$ & $5.5416\times 10^{-10}$ & $3.1641\times 10^{-7}$ & $5.6834\times 10^{-10}$ & $3.2712\times 10^{-8}$ & $1.5455\times 10^{-6}$ & $6.6264\times 10^{-9}$\\
$0.025$ & $6.7388\times 10^{-11}$ & $1.6421\times 10^{-8}$ & $3.5359\times 10^{-11}$ & $2.0090\times 10^{-9}$ & $9.0883\times 10^{-8}$ & $4.1343\times 10^{-10}$ \\
 \hline
\end{tabular}
\begin{tabular}{ l || l l l  | l l l  }
 \multicolumn{7}{l}{(b) Modified Galerkin} \\
\hline
 \multicolumn{1}{c}{} &  \multicolumn{3}{c}{$r=4$, cubic splines} &  \multicolumn{3}{c}{$r=3$, quadratic splines} \\
 \hline
$h$ & $E_{amp}$ & $E_{phase}$ & $E_{shape}$ & $E_{amp}$ & $E_{phase}$ & $E_{shape}$ \\
 \hline
$0.1$ & $8.6377\times 10^{-9}$ & $7.0627\times 10^{-6}$ & $1.2004\times 10^{-8}$ & $2.6626\times 10^{-7}$ & $1.2173\times 10^{-5}$ & $2.9430\times 10^{-7}$ \\
$0.05$ & $5.5280\times 10^{-10}$ & $3.1597\times 10^{-7}$ & $5.6723\times 10^{-10}$ & $1.5428\times 10^{-8}$ & $6.3028\times 10^{-7}$ & $1.7453\times 10^{-9}$\\
$0.025$ & $7.1619\times 10^{-11}$ & $1.6134\times 10^{-8}$ & $3.5361\times 10^{-11}$ & $9.2885\times 10^{-10}$ & $3.5562\times 10^{-8}$ & $1.0862\times 10^{-10}$ \\
 \hline
\end{tabular}
\begin{tabular}{ l || l l l}
 \multicolumn{1}{c}{} &  \multicolumn{3}{c}{$r=2$, linear splines} \\
 \hline
$h$ & $E_{amp}$ & $E_{phase}$ & $E_{shape}$ \\
 \hline
$0.1$ & $4.0487\times 10^{-5}$ & $1.7600\times 10^{-3}$ & $6.7965\times 10^{-5}$ \\
$0.05$ & $2.7453\times 10^{-6}$ & $3.3000\times 10^{-3}$ & $1.6249\times 10^{-5}$ \\
$0.025$ & $2.2631\times 10^{-7}$ & $9.5238\times 10^{-4}$ & $4.0635\times 10^{-6}$ \\
 \hline
\end{tabular}
\caption{$E_{amp}$, $E_{phase}$, $E_{shape}$ at $T=100$, mean value over the temporal interval $[80,100]$ for linear splines $E_{phase}$ (a) standard Galerkin, (b) modified Galerkin, smooth travelling wave, $\kappa=1$, $V=4.333$ }
\label{T5}
\end{table}

As expected, cubic splines ($r=4$) give the best results, while there is not much difference between the standard and the modified methods for $r=4$ and $r=3$. Over time, all these errors oscillate somewhat about mean values that remain practically constant in $t$ for $h=0.05$ and $h=0.025$.

The speed errors were practically the same for both Galerkin methods with cubic and quadratic splines for the same value of $h$ and diminished of course with $h$. The number of conserved digits of the speed was equal to five for $h=0.1$, six for $h=0.05$ and seven for $h=0.025$. In the case of piecewise linear functions with the modified method the numerical speed preserved one correct digit for $h=0.1$ and two for $h=0.05$ and $h=0.025$.

In Table \ref{T6} we show the analogous errors produced by the numerical approximations with the modified method of a peakon of speed $V=1.333$. We took $h=0.05$, $0.025$ and $0.01$ on the interval $[-100,100]$ with $\Delta t=h/10$ and show the errors at $t^n=100$.

\begin{table}[ht!]
\begin{tabular}{ l || l l l  | l l l  }
\hline
 \multicolumn{1}{c}{} &  \multicolumn{3}{c}{$r=4$, cubic splines} &  \multicolumn{3}{c}{$r=3$, quadratic splines} \\
 \hline
$h$ & $E_{amp}$ & $E_{phase}$ & $E_{shape}$ & $E_{amp}$ & $E_{phase}$ & $E_{shape}$  \\
 \hline
$0.05$ & $1.1717\times 10^{-2}$ & $6.4696\times 10^{-1}$ & $2.5744\times 10^{-2}$ & $1.6177\times 10^{-2}$ & $1.0482\times 10^{0}$ & $1.1215\times 10^{-2}$ \\
$0.025$ & $6.1999\times 10^{-3}$ & $3.2055\times 10^{-1}$ & $1.3246\times 10^{-2}$ & $8.1867\times 10^{-3}$ & $5.1723\times 10^{-1}$ & $6.6000\times 10^{-3}$\\
$0.01$ & $2.6237\times 10^{-3}$ & $1.2855\times 10^{-1}$ & $5.9284\times 10^{-3}$ & $3.4618\times 10^{-3}$ & $2.0548\times 10^{-1}$ & $3.2406\times 10^{-3}$ \\
 \hline
\end{tabular}
\begin{tabular}{ l || l l l}
 \multicolumn{1}{c}{} &  \multicolumn{3}{c}{$r=2$, linear splines} \\
 \hline
$h$ & $E_{amp}$ & $E_{phase}$ & $E_{shape}$ \\
 \hline
$0.05$ & $1.1487\times 10^{-2}$ & $6.5000\times 10^{-1}$ & $5.8839\times 10^{-2}$ \\
$0.025$ & $7.1264\times 10^{-3}$ & $3.0000\times 10^{-1}$ & $3.2941\times 10^{-2}$ \\
$0.01$ & $3.5565\times 10^{-3}$ & $1.1000\times 10^{-1}$ & $1.3585\times 10^{-2}$ \\
 \hline
\end{tabular}
\caption{$E_{amp}$, $E_{phase}$, $E_{shape}$ at $T=100$, modified Galerkin method, peakon, $V=1.333$ }
\label{T6}
\end{table}

The errors of the standard Galerkin method (for $r=4$ and $r=3$) were roughly the same. The errors did not oscillate in time and practically remained constant for the smaller values of $h$. Because the maximum of the the maximum of the peakon for linear splines coincides with the mesh node $x=33.3$ at $T=100$ the actual $E_{phase}$ happened to be 0. For this reason we present the mean value of the $E_{phase}$ for all times $t^n\in [80,100]$. In all cases the speed was conserved to two digits when $h=0.05$ and $h=0.025$, and to three digits when $h=0.01$.

\subsection{Approximation of the generation and interactions of peakons}\label{sec65}

In closing we record some observations concerning the approximation of peakons by the fully discrete Galerkin methods under consideration:

(i) When a nonsmooth function such as a peakon is used as initial value in an evolution numerical experiment one expects a practically localized oscillatory error to appear at $t=0$ as a result of projecting the peakon onto the finite element space. As previously mentioned we normally use as initial value of the discrete schemes the $H^1$ projection of the initial data; using the $L^2$ projection or the interpolant leads to similar errors. 

As an example, consider the evolution shown in Figure \ref{fig4}, in which a peakon of unit speed, initially centered at $x=0$, is approximated by the $H^1$ projection in the space, of quadratic splines and its evolution is effected by the fully discrete modified Galerkin method up to $T=50$. An oscillatory error, decaying fast in space, of amplitude $7.5\times 10^{-3}$ for $h=0.05$, is generated near $x=0$ where the initial peakon was located. It diminishes slowly as $h$ is decreased; for example its amplitude is about $1.6\times 10^{-3}$ when $h=6.25\times 10^{-3}$. A similar error is observed in the case of the standard Galerkin method for the other spline spaces; the modified method gives somewhat better results. We observed that the same type of error appears in spectral discretizations of the CH equation with the interpolant e.g. taken as initial condition.
\begin{figure}[ht!]
  \centering
  \includegraphics[width=0.7\columnwidth]{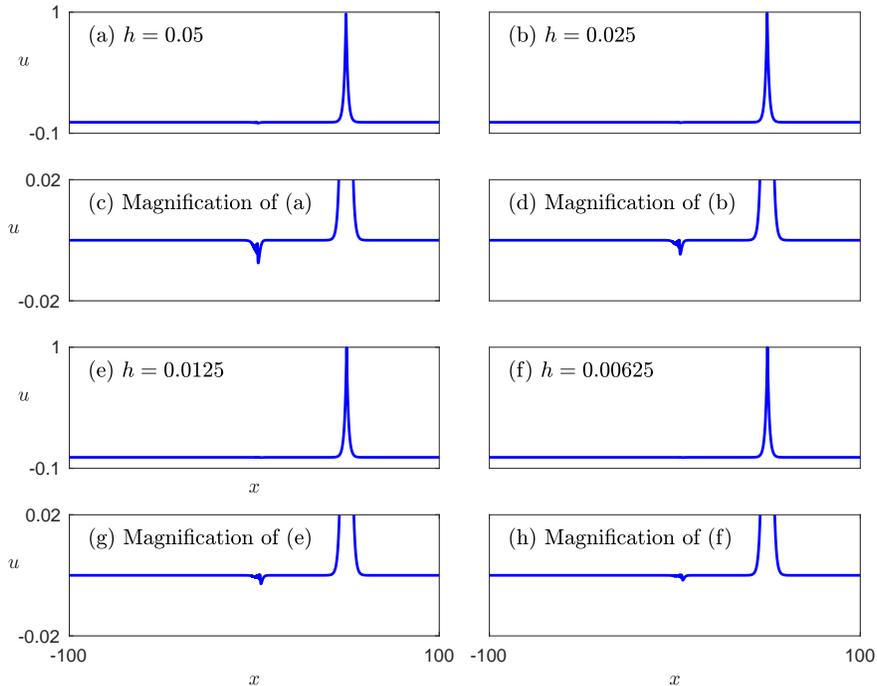}
  \caption{Peakon propagation with the modified Galerkin method and quadratic splines for various values of $h$, $\Delta t=h/10$, $T=50$}
  \label{fig4}
\end{figure}

\begin{figure}[ht!]
  \centering
  \includegraphics[width=0.7\columnwidth]{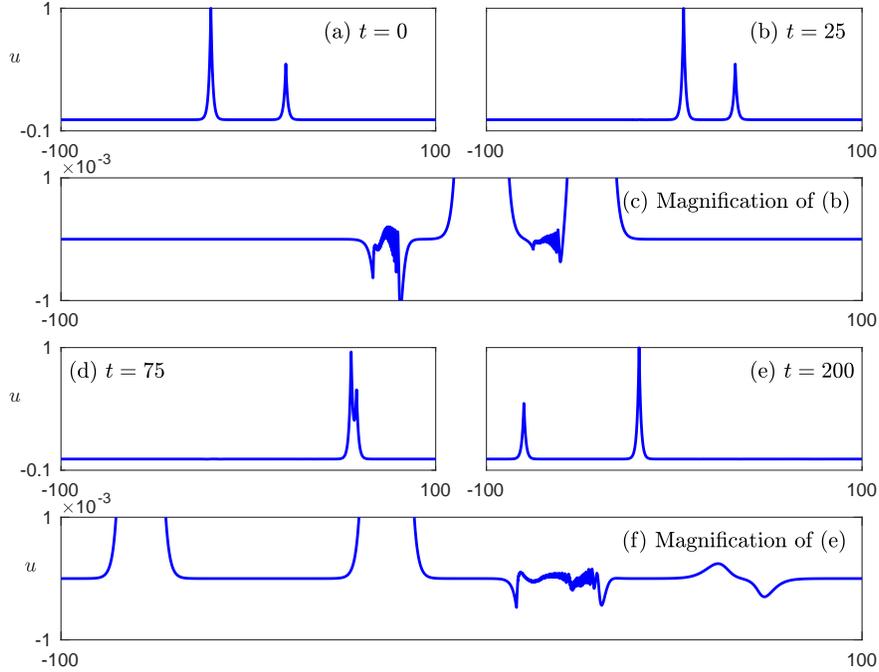}
  \caption{Interaction of two peakons, modified Galerkin, cubic splines, $h=0.005$, $\Delta t=h/10$}
  \label{fig5}
\end{figure}

\begin{figure}[ht!]
  \centering
  \includegraphics[width=0.7\columnwidth]{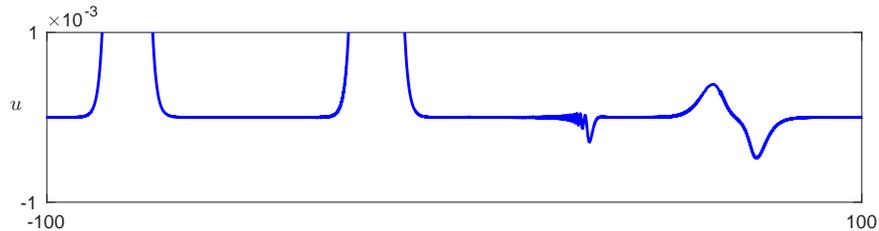}
  \caption{Interaction of two peakons (evolution of Figure \ref{fig5}) using the standard pseudospectral-RK4 method, $N=16384$, $\Delta=5\times10^{-3}$}
  \label{fig6}
\end{figure}

This small noise remains stationary in time in the vicinity of the spatial point of its generation. Apparently this property is shared by the CH equation (due to the presence of the $-u_{xxt}$ term) with the BBM equation, for which, it was shown in \cite{BBM} that initial discontinuities in the data do not propagate. (Similar errors, noticed in numerical solutions of the BBM equation in \cite{bps}, were caused by the truncation of the decaying ends of an initial solitary-wave profile in that reference.)

(ii) Peakons, being solitons, interact elastically, and their overtaking collisions have been described in detail analytically cf. e.g. \cite{ce1,p}. However, when their interactions are simulated by a numerical evolution method one observes that numerical artifacts, in the form of small residual waves, are produced.

As an example, we consider two peakons of amplitudes $1.0$ and $0.5$ centered initially at $x=-20$ and $20$, respectively. With those initial values we integrate the RCH with periodic conditions on the interval $[-100,100]$ using the fully discrete modified Galerkin method with cubic splines with $h=5\times 10^{-3}$ and $\Delta t=h/10$. The results of the numerical evolution up to $t=200$ are shown in Figure \ref{fig5}.

In Figure \ref{fig5}(c), at $t=25$, we observe again the small oscillations (stationary in the vicinity of $x=-20$ and $x=20$) produced by the $H^1$ projection of the initial peakons in the finite element space and previously commented on. The peakons first interact in a spatial and temporal window centered at $(x,t)=(60,80)$ and wrap around the boundary due to periodicity. At $t=200$ (Figure \ref{fig5}(e), (f)) we observe the high frequency stationary oscillations produced near $x=20$ due to the initial approximation of the smaller peakon. (The analogous oscillations produced near $x=-20$ by the projection of the larger peakon have `climbed' on the larger peakon which is centered at $x=-20$ at this time instant.) We also observe near $x=60$ a large-wavelength {\em wavelet} of amplitude about $2.8\times 10^{-4}$. This wavelet is stationary; note that the dispersion relation of the linearized RCH is $\omega=0$, and therefore small-amplitude disturbances are not expected to propagate. The wavelet is a numerical artifact produced by the numerical approximation of the peakon collision. Its amplitude diminishes slowly as $h$ is decreased; e.g. it was equal to $5\times 10^{-4}$ when $h=0.01$. The wavelet reappeared near $x=60$ after the next interaction (due to periodicity), of the peakons, that occurred around $t=480$. (In order to observe the wavelet after the subsequent interactions, which all occur near $x=60$, one has to `clean' smoothly the solution profile well after the first interaction from small-amplitude artifacts and let the main pulses interact again). That the wavelet is specific to the numerical interaction of peakons may also be seen from the fact that it is absent when overtaking collisions of {\em smooth} travelling waves of the RCH are simulated numerically. When we did so (with the same numerical method taking $h=0.1$ and $\Delta t=h/10$), we observed that only the expected numerical dispersive tail was observed; the latter had an amplitude of $O(10^{-6})$ and diminished fast as $h$ was decreased. 

The wavelet also appears after numerical peakon interactions effected by other type of discretizations. For example, the pseudospectral-RK4 method with $N=16,384$, $\Delta t=5\times 10^{-3}$ gives for the analogous experiment shown in Figure \ref{fig5}, at $t=200$ the profile given in Figure \ref{fig6}, very similar to the one of Figure \ref{fig5}(f). The wavelet and the initial oscillations diminish slowly as $N$ is increased and $\Delta t$ is decreased. The fact that the numerical interaction of peakons with spectral methods give small errors that diminish slowly as the mesh is refined was already observed in \cite{kl}. Here we have pointed out that the small errors are of the two kinds depicted in Figures \ref{fig5} and \ref{fig6}.

\begin{figure}[ht!]
  \centering
  \includegraphics[width=0.7\columnwidth]{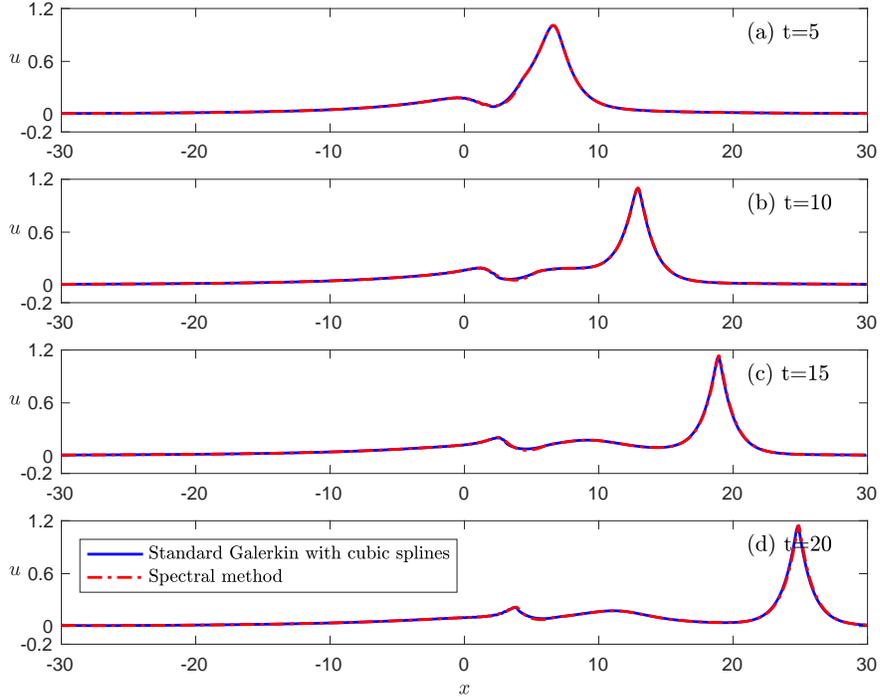}
  \caption{Evolution of RCH with initial profile \eqref{eq610}, standard Galerkin method with cubic splines and pseudospectral-RK4 scheme}
  \label{fig7}
\end{figure}

\begin{figure}[ht!]
  \centering
  \includegraphics[width=0.7\columnwidth]{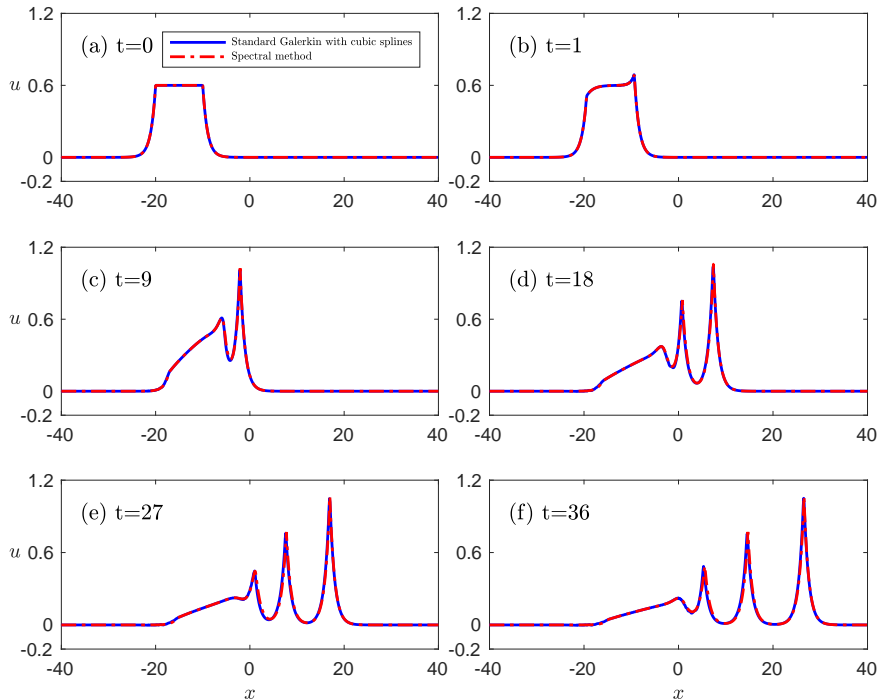}
  \caption{Evolution of RCH with initial profile \eqref{eq611}, Standard Galerkin method with cubic splines and pseudospectral-RK4}
  \label{fig8}
\end{figure}

(iii) We finally present results obtained by applying our numerical methods to two evolution problems for RCH that have been proposed in the literature as examples of peakon generation from continuous initial profiles that have discontinuous derivatives. The first, \cite{hri,xs,clp2}, corresponds to the initial value
\begin{equation}\label{eq610}
u_0(x)=\frac{10}{(3+|x|)^2}, \quad x\in [-30,30]\ ,
\end{equation}
which we integrated up to $t=20$ by our fully discrete standard Galerkin method with cubic splines using $h=0.1$, $\Delta t=0.001$, cf. Figure \ref{fig7}.
Superimposed on the spline graphs of Figure \ref{fig7} are the corresponding obtained by the usual pseudospectral-RK4 discretization of RCH with $N=4096$, $\Delta t=0.01$. (For smaller values of $N$ the pseudospectral method had a noticeable phase error by $t=20$.)

The second initial profile, \cite{as,xs}, corresponds to the plateau function given by
\begin{equation}\label{eq611}
u_0(x)=\left\{\begin{array}{lcl}
c\ e^{x+5} &, & x\leq -5\\
c &, & |x|\leq 5\\
c\ e^{-x+5} &, & x\geq 5
\end{array}\right.\quad x\in[-40,40],\quad c=0.6\ ,
\end{equation}

and produces the evolution shown up to $t=36$ in Figure \ref{fig8}. For the fully discrete stasdard Galerkin scheme with cubic splines  we took $h=0.1$ and $\Delta t=0.01$. Superimposed are shown the corresponding profiles generated by the usual pseudospectral-RK4 scheme for $N=8192$, $\Delta t=0.001$. For smaller values of $N$ the scheme exhibited phase and amplitude errors in approximating the larger emerging peakon. In both test problems the Galerkin method gave very accurate results.

\bibliographystyle{plain}
\bibliography{biblio}

\begin{thebibliography}{10}

\bibitem{adm}
D.C. Antonopoulos, V.A. Dougalis, and D.~Mitsotakis.
\newblock {Numerical solution of Boussinesq systems of the Bona--Smith family}.
\newblock {\em Appl. Numer. Math.}, 60:314--336, 2010.

\bibitem{as}
R.~Artebrant and H.J. Schroll.
\newblock {Numerical simulation of Camassa-Holm peakons by adaptive upwinding}.
\newblock {\em Appl. Numer. Math.}, 56:695--711, 2006.

\bibitem{BBM}
T.B. Benjamin, J.L. Bona, and J.J. Mahony.
\newblock Model equations for long waves in nonlinear dispersive systems.
\newblock {\em Phil. Trans. R. Soc. Lond. A}, 272:47--78, 1972.

\bibitem{bdkmc}
J.L. Bona, V.A. Dougalis, O.A. Karakashian, and W.R. McKinney.
\newblock {Conservative, high-order numerical schemes for the generalized
  Korteweg-de Vries equation}.
\newblock {\em Phil. Trans. R. Soc. Lond. A}, 351:107--164, 1995.

\bibitem{bps}
J.L. Bona, W.G. Pritchard, and L.R. Scott.
\newblock Numerical schemes for a model for nonlinear dispersive waves.
\newblock {\em J. Comp. Phys.}, 60:167--186, 1985.

\bibitem{bc}
A.~Bressan and A.~Constantin.
\newblock {Global conservative solutions of the Camassa--Holm equation}.
\newblock {\em Arch. Rational Mech. Anal.}, 183:215--239, 2007.

\bibitem{ch}
R.~Camassa and D.D. Holm.
\newblock {An integrable shallow water equation with peaked solitons}.
\newblock {\em Phys. Rev. Lett.}, 71:1661--1664, 1993.

\bibitem{chh}
R.~Camassa, D.D. Holm, and J.M. Hyman.
\newblock {A New Integrable Shallow Water Equation}.
\newblock {\em Adv. Appl. Mech.}, 31:1--33, 1994.

\bibitem{clp1}
A.~Chertock, J.-G. Liu, and T.~Pendleton.
\newblock {Convergence of a particle method and global weak solutions of a
  family of evolutionary PDEs}.
\newblock {\em SIAM J. Numer. Anal.}, 50:1--21, 2012.

\bibitem{clp2}
A.~Chertock, J.-G. Liu, and T.~Pendleton.
\newblock {Elastic collisions among peakon solutions for the Camassa-Holm
  equation}.
\newblock {\em Appl. Num. Math.}, 93:30--46, 2015.

\bibitem{ckr}
G.M. Coclite, K.H. Karlsen, and N.H. Risebro.
\newblock A convergent finite difference scheme for the {C}amassa-{H}olm
  equation with general $h^{1}$ initial data.
\newblock {\em SIAM J. Numer. Anal.}, 46:1554--1579, 2008.

\bibitem{c-le}
A.~Constanin and J.~Lenells.
\newblock {On the inverse scattering approach to the Camassa-Holm equation}.
\newblock {\em J. Nonlinear Math. Phys}, 10:252--255, 2003.

\bibitem{cper}
A.~Constantin.
\newblock {On the Cauchy problem for the periodic Camassa-Holm equation}.
\newblock {\em J. Differential Equations}, 141:218--235, 1997.

\bibitem{cscat}
A.~Constantin.
\newblock {On the scattering problem for the Camassa-Holm equation}.
\newblock {\em Proc. R. Soc. London A}, 457:953--970, 2001.

\bibitem{ce1}
A.~Constantin and J.~Escher.
\newblock { Global existence and blow-up for a shallow water equation}.
\newblock {\em Annali Scuola Norm. Sup. Pisa}, 26:303--328, 1998.

\bibitem{ce2}
A.~Constantin and J.~Escher.
\newblock {Wave breaking for nonlinear nonlocal shallow water equations}.
\newblock {\em Acta Math.}, 181:229--243, 1998.

\bibitem{cl}
A.~Constantin and D.~Lannes.
\newblock {The hydrodynamical relevance of the Camassa-Holm and
  Degasperis-Procesi equations}.
\newblock {\em Arch. Rational Mech. Anal.}, 192:165--186, 2009.

\bibitem{c-mo}
A.~Constantin and L.~Molinet.
\newblock {Orbital stability of solitary waves for a shallow water equation}.
\newblock {\em Physica D}, 57:75--89, 2001.

\bibitem{c-si}
A.~Constantin and W.~Strauss.
\newblock {Stability of peakons}.
\newblock {\em Comm. Pure Appl. Math}, 53:603--610, 2000.

\bibitem{c-sii}
A.~Constantin and W.~Strauss.
\newblock {Stability of the Camassa-Holm solitons}.
\newblock {\em J. Nonlinear Sci.}, 12:415--422, 2002.

\bibitem{df}
C.~De~Boor and G.J. Fix.
\newblock {Spline approximation by quasiinterpolants}.
\newblock {\em J. Approx. Theory}, 8:19--45, 1973.

\bibitem{ddw}
Jr. Douglas, J., T.~Dupont, and J.B. Wahlbin.
\newblock Optimal $l^{\infty}$ error estimates for {G}alerkin approximations to
  solutions of two-point boundary value problems.
\newblock {\em Math. Comp.}, 29:475--483, 1975.

\bibitem{ey}
J.~Escher and Z.~Yin.
\newblock {Initial boundary value problems of the Camassa-Holm equation}.
\newblock {\em Communications in Partial Differential Equations}, 33:377--395,
  2008.

\bibitem{fo}
A.S. Fokas.
\newblock {On a class of physically important integrable equations}.
\newblock {\em Physica D}, 87:145--150, 1995.

\bibitem{fu}
B~Fuchssteiner.
\newblock {Some tricks from the symmetry-toolbox for nonlinear equations:
  Generalizations of the Camassa-Holm equation}.
\newblock {\em Physica D}, 95:229--243, 1996.

\bibitem{ff}
B.~Fuchssteiner and A.S. Fokas.
\newblock Symplectic structures, their {B}{\"a}cklund tranformations and
  hereditary symmetries.
\newblock {\em Physica D}, 4:47--66, 1981.

\bibitem{hri}
H.~Holden and X.~Raynaud.
\newblock {A convergent numerical scheme for the Camassa-Holm equation based on
  multipeakons}.
\newblock {\em Discr. Contin. Dyn. Syst.}, 14:505--523, 2006.

\bibitem{hr}
H.~Holden and X.~Raynaud.
\newblock {Convergence of a finite difference scheme for the Camassa-Holm
  equation}.
\newblock {\em SIAM J. Numer. Anal.}, 44:1655--1680, 2006.

\bibitem{j1}
R.S. Johnson.
\newblock {Camassa-Holm, Korteweg-de Vries and related models for water waves}.
\newblock {\em J. Fluid Mech.}, 455:63--82, 2002.

\bibitem{j2}
R.S. Johnson.
\newblock {On solutions of the Camassa-Holm equation}.
\newblock {\em Proc. R. Soc. London A}, 459:1687--1708, 2003.

\bibitem{kl}
H.~Kalisch and J.~Lenells.
\newblock {Numerical study of traveling-wave solutions for the Camassa-Holm
  equation}.
\newblock {\em Chaos, Solitons and Fractals}, 25:287--298, 2005.

\bibitem{kr}
H.~Kalisch and X.~Raynaud.
\newblock {Convergence of a spectral projection of the Camassa-Holm equation}.
\newblock {\em Num. methods for PDE's}, 22:1197--1215, 2006.

\bibitem{k}
K.H. Kwek, H.~Gao, W.~Zhang, and C.~Qu.
\newblock {An initial boundary value problem of Camassa-Holm equation}.
\newblock {\em J. Math. Phys.}, 41:8279--8285, 2000.

\bibitem{le}
J.~Lenells.
\newblock {Traveling wave solutions of the Camassa-Holm equation}.
\newblock {\em J. Differential Equations}, 217:393--430, 2005.

\bibitem{lo}
Y.A. Li and P.J. Olver.
\newblock {Well-posedness and blow-up solutions for an integrable nonlinearly
  dispersive model wave equation}.
\newblock {\em J. Differential Equations}, 162:27--63, 2000.

\bibitem{lx}
H.~Liu and Y.~Xing.
\newblock {An invariant preserving discontinuous Galerkin method for the
  Camassa-Holm Equation}.
\newblock {\em SIAM J. Sci. Comput.}, 38:A1919--A1934, 2016.

\bibitem{m}
L.~Molinet.
\newblock {On well-posedness results for Camassa-Holm equation on the line: a
  survey}.
\newblock {\em J. Nonlinear Math. Phys.}, 11:521--533, 2004.

\bibitem{pai}
A.~Parker.
\newblock {On the Camassa-Holm equation and a direct method of solution I.
  Bilinear form and solitary waves}.
\newblock {\em Proc. R. Soc. London A}, 460:2929--2957, 2004.

\bibitem{paii}
A.~Parker.
\newblock {On the Camassa-Holm equation and a direct method of solution. II.
  Soliton solutions}.
\newblock {\em Proc. R. Soc. London A}, 461:3611--3632, 2005.

\bibitem{paiii}
A.~Parker.
\newblock {On the Camassa-Holm equation and a direct method of solution. III.
  N-soliton solutions}.
\newblock {\em Proc. R. Soc. London A}, 461:3893--3911, 2005.

\bibitem{p}
A~Parker.
\newblock {Wave dynamics for peaked solitons of the Camassa-Holm equation}.
\newblock {\em Chaos, Solitons and Fractals}, 35:220--237, 2008.

\bibitem{rs}
R.~Rannacher and R.~Scott.
\newblock {Some optimal error estimates for piecewise linear finite element
  approximations}.
\newblock {\em Math. Comp.}, 38:437--445, 1982.

\bibitem{s}
R.~Schreiber.
\newblock {Finite element methods of high-order accuracy for singular two-point
  boundary value problems with nonsmooth solutions}.
\newblock {\em SIAM J. Numer. Anal.}, 17:547--566, 1980.

\bibitem{twa}
V.~Thom{\'e}e and L.B. Wahlbin.
\newblock {Maximum-norm stability and error estimates in Galerkin methods for
  parabolic equations in one space variable}.
\newblock {\em Numer. Math.}, 41:345--371, 1983.

\bibitem{tw}
V.~Thom{\'e}e and B.~Wendroff.
\newblock {Convergence estimates for Galerkin methods for variable coefficient
  initial value problems}.
\newblock {\em SIAM J. Numer. Anal.}, 11:1059--1068, 1974.

\bibitem{xs}
Y.~Xu and C.-W. Shu.
\newblock {A local discontinuous Galerkin method for the Camassa-Holm
  equation}.
\newblock {\em SIAM J. Numer. Anal.}, 46:1998--2021, 2008.

\end{thebibliography}
\bigskip

\end{document}